\newcommand{\mmod}[1]{\,\,(\text{\rm mod}\,\, #1)}
\def\bfx{{\mathbf x}}
\def\bfn{{\boldsymbol n}}
\def\bfalpha{{\boldsymbol \alpha}}
\def\bfnu{{\boldsymbol \nu}}
\newtheorem{thm}{Theorem}
\newtheorem{lem}{Lemma}
\newtheorem{conj}{Conjecture}
\newtheorem{prop}{Proposition}
\numberwithin{equation}{section} \numberwithin{thm}{section}
\numberwithin{lem}{section} \numberwithin{problem}{section}
\numberwithin{cor}{section}
\def\grm{{\mathfrak m}}\def\grM{{\mathfrak M}}\def\grN{{\mathfrak N}}\def\grn{{\mathfrak n}}\def\grP{{\mathfrak P}}
\begin{document}
\title{Uniform bounds in Waring's problem over some diagonal forms}
\author[Javier Pliego]{Javier Pliego}
\address{School of Mathematics, University of Bristol, University Walk, Clifton, Bristol BS8 1TW, United 
Kingdom.}

\address{Current address: Purdue Mathematical Science Building, 150 N University St, West Lafayette, IN 47907, United States of America.}

\email{jp17412@bristol.ac.uk}
\subjclass[2010]{11E76, 11P05, 11P55}
\keywords{Waring's problem, Hardy-Littlewood method, Diagonal forms}

\begin{abstract} We investigate the existence of representations of every large positive integer as a sum of $k$-th powers of integers represented as certain diagonal forms. In particular, we consider a family of diagonal forms and discuss the problem of giving a uniform upper bound over the family for the number of variables needed to have such representations.
\end{abstract}
\maketitle

\section{Introduction} 
Waring's problem (first resolved by Hilbert) asserts that for every $k\in\mathbb{N}$ there exists $s=s_{0}(k)$ such that all positive natural numbers can be written as a sum of $s$ positive integral $k$-th powers. Likewise, the problem of representing a sufficiently large natural number $n$ in the shape 
\begin{equation}\label{solu}n=x_{1}^{k}+\dots+x_{s}^{k},\end{equation} with $x_{i}\in \mathcal{S},$ where $\mathcal{S}$ is a given subset of the integers, has also been studied for particular cases.  However, little has been written about Waring's problem when one considers specific sparse sets, and apart from the set of prime numbers, not much can be found on the literature. It is then rare to encounter examples of sparse sets with a structure fundamentally different in nature in which Waring's problem along the lines of equation (\ref{solu}) is solvable. 

For a general set $\mathcal{A}\subset\mathbb{N}$, denote by $G_{\mathcal{A}}(k)$ the least positive integer $s$ such that for all sufficiently large natural numbers $n$, the equation (\ref{solu}) possesses a solution with $x_{i}\in\mathcal{A}.$ If such a number does not exist, define it as $\infty$. Let $\alpha>0$ and consider sets $\mathcal{A}_{\alpha}\subset\mathbb{N}$ well distributed on arithmetic progressions and with the property that $\lvert\mathcal{A}_{\alpha}\cap[1,N]\lvert\gg N^{\alpha}$. Then, on denoting $$W(k,\alpha)=\sup_{\mathcal{A}_{\alpha}}\big\{G_{\mathcal{A}_{\alpha}}(k)\big\},$$one would hope to have $W(k,\alpha)<\infty.$ In what follows we describe a particular family of sets satisfying the above property for which we expect the previous uniform bound to hold, but first we introduce, for convenience, some notation. For fixed $k,l,t\in\mathbb{N}$, let $\mathbf{x}=(x_{1},\ldots,x_{t})\in\mathbb{N}^{t}$, consider the function $T_{t}(\mathbf{x})=x_{1}^{l}+\ldots+x_{t}^{l}$ and take the set 
$$\mathcal{T}_{t}=\big\{T_{t}(\mathbf{x}):\ \mathbf{x}\in\mathbb{N}^{t}\big\}.$$ 

In this memoir we restrict our attention to the analysis of the solubility of (\ref{solu}) for the choice $\mathcal{S}=\mathcal{T}_{t}$ with $t$ lying in the following two regimes:

$(i)$ When $t=C(k)l$ for any fixed integer-valued function satisfying $C(k)\geq \textstyle{\frac{1}{2}}\log (k(k+1)).$ Work of Wooley \cite{Woo1} then yields the lower bound $\lvert \mathcal{T}_{t}\cap [1,N]\lvert \gg N^{1-\beta/k^{2}}$ for some constant $\beta>0$. The reader may notice that once we fix $k$, the above bound is uniform over the family of sets $\mathcal{T}_{t}$, whence in view of the preceding discussion we expect to have $G_{\mathcal{T}_{t}}(k)<G_{C}(k)$ for all $t$ in this regime with $G_{C}(k)$ being a constant depending on $k$ and $C$. As will be discussed afterwards, the lower bound for the cardinality of the sets available is not strong enough to prove such a statement, and we end up showing something weaker.

$(ii)$ When $t\geq \textstyle{\frac{l}{2}}\big(\log l+\log(k(k+1))+2\big)$ then work of Wooley \cite{Woo1} yields the stronger lower bound $\lvert \mathcal{T}_{t}\cap [1,N]\lvert \gg N^{1-\gamma/lk^{2}}$ for some absolute constant $\gamma>0$ at the cost of taking more variables. Were the sets $\mathcal{T}_{t}$ to have positive density, the argument would be considerably simplified and a pedestrian approach of the circle method would suffice. We use though the estimate available for the cardinality of these sets to derive a bound for $G_{\mathcal{T}_{t}}(k)$ that only depends on $k$. 

For the rest of the introduction we discuss each of the two regimes described above and provide some motivation underlying their choice. As experts will realise, an application of the Hardy-Littlewood method delivers the solubility of (\ref{solu}) for $\mathcal{S}=\mathcal{T}_{t}$ when $t=C(k)l$ and $s$ is large enough in terms of $k$ and $l$. We denote by $S_{C}(k,l)$ the minimum $s$ with such a property and consider
$$P_{C}(k)=\sup_{l\geq 2}\big\{S_{C}(k,l)\big\},$$ which does not necessarily have to be finite. We also define the constant
\begin{equation}\label{cono}\delta_{r}=\exp(1-2r/l)\end{equation}for each $r\in\mathbb{N}$ and note that then combining the corollary to Theorem 2.1 of Wooley \cite{Woo1} and a standard argument involving Cauchy's inequality one obtains the lower bound\begin{equation}\label{1.fi}\lvert \mathcal{T}_{t}\cap [1,N]\rvert \gg N^{1-\delta_{t}},\end{equation}where $\delta_{t}=\exp\big(1-2C(k)\big)$ just depends on $k$. As previously mentioned, the estimate (\ref{1.fi}) is uniform once we fix $k$, whence the discussion made above motivates the following conjecture.

\begin{conj}\label{con1}
Let $k\in\mathbb{N}.$ There exists a positive integer-valued function $C:\mathbb{N}\rightarrow\mathbb{N}$ such that $P_{C}(k)<\infty.$
\end{conj}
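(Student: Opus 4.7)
The natural attack is the Hardy--Littlewood circle method applied to the exponential sum
$$f(\alpha) = \sum_{\substack{1 \leq m \leq N \\ m \in \mathcal{T}_t}} e(\alpha m^k), \qquad N \asymp n^{1/k},$$
so that the number of representations of $n$ in the shape (\ref{solu}) with $x_i \in \mathcal{T}_t$ equals $\int_0^1 f(\alpha)^s e(-\alpha n)\,d\alpha$. One would decompose the torus into major arcs $\grM$ and minor arcs $\grm$ in the classical way. The feature motivating the conjecture is that, by (\ref{1.fi}), the support of $f$ has size $\gg N^{1-\delta_t}$ with $\delta_t = \exp(1 - 2C(k))$ depending only on $k$, and the function $C$ is at our disposal: taking $C(k)$ sufficiently large makes $\delta_t$ as small as we wish.

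On the major arcs I would pass to the parametrization $m = y_1^l + \cdots + y_t^l$ to evaluate the local densities, so that the singular series and singular integral factor through the standard Waring local densities for exponent $l$; provided $\mathcal{T}_t$ is sufficiently well distributed on arithmetic progressions these factors are bounded below uniformly in $l$, and the main term becomes of order $N^{s/k - 1 - s\delta_t/k}$. Admissibility then forces $s(1 - \delta_t) > k$, a condition depending only on $k$ and $C$. For the mean value input on the minor arcs one has the additional advantage that dropping the restriction $m_i \in \mathcal{T}_t$ yields an upper bound by the classical mean value for $k$-th powers, so that Vinogradov- or efficient-congruencing-type estimates---whose hypotheses depend only on $k$---can be invoked without degrading with $l$.

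The genuine obstacle lies in obtaining a pointwise minor arc bound of the shape $\sup_{\alpha \in \grm}|f(\alpha)| \leq N^{1-\tau}$ with $\tau > 0$ bounded below independently of $l$. Lifting once more to the $\bfy$-variables writes $f$ as an exponential sum in $t = C(k)l$ variables of the polynomial $T_t(\bfy)^k$, whose total degree $kl$ grows with $l$; na\"ive Weyl differencing in the $y_i$ loses a factor of $2^{kl}$ and becomes useless as $l \to \infty$. Circumventing this deterioration seems to require exploiting the nested structure of $T_t(\bfy)^k$---for example, differencing only once in a single $y_i$ to bring the inner degree back to $k$ before combining with auxiliary $\ell^p$ information supplied by (\ref{1.fi})---and it is here that a genuinely new input beyond off-the-shelf circle method technology appears to be required, which is precisely what keeps Conjecture \ref{con1} open.
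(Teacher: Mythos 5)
The statement you were asked about is Conjecture \ref{con1}, and the paper does not prove it: the author explicitly states that it ``seems to be out of reach with the methods available in the literature for any value of $k$,'' and instead proves the weaker Theorem \ref{thm1.2}. Your text, accordingly, is not a proof and does not claim to be one --- it ends by conceding that a new input is needed. As a diagnosis it is essentially correct and matches the paper's own discussion in Section \ref{sec2}: the lower bound (\ref{1.fi}) is uniform in $l$ once $k$ is fixed, the mean value inputs can be made uniform in $l$ (the paper gets this from Corollary 1.4 of Wooley \cite{Woo4} applied to the restricted set, Proposition \ref{prop1}), and the genuine obstruction is that any Weyl-type or multidimensional Vinogradov estimate for the degree-$kl$ polynomial $T_t(\mathbf{y})^k$ degrades with $l$, so no pointwise minor-arc saving uniform in $l$ is available. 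One small caution: your suggestion that the minor-arc mean value can be handled by ``dropping the restriction $m_i\in\mathcal{T}_t$'' is too lossy --- with $\lvert\mathcal{T}_t\cap[1,N]\rvert\approx N^{1-\delta_t}$ one cannot afford to replace the restricted sum by the full Weyl sum in the dominant moments; the paper's Proposition \ref{prop1} keeps the restriction precisely to retain the factor $\lvert\mathcal{S}_r(Y)\rvert^{2s}$.

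For comparison, the route the paper actually takes to salvage something is worth noting. Rather than seeking a pointwise bound for $f(\alpha)=\sum_{x\in\mathcal{T}_t}e(\alpha x^k)$ on $\grm$, it exploits the additive splitting $\mathcal{T}_t=\mathcal{T}_{t_1}+\mathcal{T}_l$ and works with $\mathcal{F}(\alpha)=\sum_{m\in\mathcal{S}_2}\sum_{x\in\mathcal{S}_1}e\big(\alpha(x+m)^k\big)$; a spacing argument for the points $\gamma_{k-1}(m)=k\alpha m^{k-1}$ on the minor arcs feeds a large-sieve inequality (Lemma 5.3 of Vaughan \cite{Vau}), which together with the weighted Vinogradov mean value gives minor-arc estimates for $\mathcal{F}$ uniform in $l$ (Proposition \ref{prop22}). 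The price is that the major-arc approximation of $f$ (Proposition \ref{prop200}) is too weak on the wide arcs, so the author inserts a handful of classical Weyl sums over $k$-th powers of integers and of primes to prune to $\grP$; this is exactly why the result obtained is $R_C(k)\le 4$ with $P_C(k,r)\le k^2+O(k)$, and not $R_C(k)=0$, which is what Conjecture \ref{con1} asserts. So the conjecture remains open, your assessment of why is sound, and the only substantive difference from the paper's perspective is that the paper shows how to trade the missing pointwise bound for a mean-value-only argument at the cost of at most four auxiliary $k$-th powers.
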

This conjecture seems to be out of reach with the methods available in the literature for any value of $k$. However, in this paper we make some progress by using an argument which permits us to prove a weaker version which we describe next after introducing first some notation. For $s\in\mathbb{N}$, the choice of $t$ described above and any $r\geq 0$, consider the equation
\begin{equation}\label{conje}
n=\sum_{i=1}^{s}T_{t}(\mathbf{x}_{i})^{k}+\sum_{i=1}^{r}x_{i}^{k},
\end{equation}
where $\mathbf{x}_{i}\in\mathbb{N}^{t}$ and $x_{i}\in\mathbb{N}.$ Let $S_{C}(k,l,r)$ denote the minimum number such that for $s\geq S_{C}(k,l,r),$ the equation (\ref{conje}) has a solution for all sufficiently large $n$ and take $$P_{C}(k,r)=\sup_{l\geq 2}\{S_{C}(k,l,r)\},$$which, as before, does not necessarily have to be finite. We define $R_{C}(k)$ to be the minimum $r\geq 0$ such that $P_{C}(k,r)$ is finite. After the preceding discussion we are now equipped to state the main theorem of the paper.
\begin{thm}\label{thm1.2}
Let $k\geq 2$ and consider any positive integer-valued function $C(k)$ with the property that $$C(k)\geq \max\big(4,\textstyle{\frac{1}{2}}\log (k(k+1))+3/2\big).$$ Then one has the bound
$$R_{C}(k)\leq 4,$$ and for every $r\geq 4$ one finds that $P_{C}(k,r)\leq k^{2}+O(k).$ Moreover, $R_{C}(2)\leq 2$.
\end{thm}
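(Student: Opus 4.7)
The plan is to apply the Hardy--Littlewood method to the integral
$$
R(n) = \int_0^1 f(\alpha)^s g(\alpha)^r e(-n\alpha)\, d\alpha,
$$
where, with $P = n^{1/k}$,
$$
f(\alpha) = \sum_{\substack{\mathbf{x}\in\mathbb{N}^t \\ T_t(\mathbf{x})\leq P}} e\bigl(\alpha T_t(\mathbf{x})^k\bigr), \qquad g(\alpha) = \sum_{1 \leq x \leq P} e(\alpha x^k),
$$
so that $R(n)$ counts the solutions of (\ref{conje}). The lower bound $f(0) \gg P^{1-\delta_t}$ supplied by (\ref{1.fi}) (working with the weighted $f$ rather than an indicator sum spares us the need for an upper bound on $\lvert\mathcal{T}_t\cap[1,P]\rvert$) dictates a putative main term of size $P^{s(1-\delta_t)+r-k}$, and the goal is to show $R(n)>0$.

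I would use a Farey dissection into major arcs $\grM$ of denominator at most $P^\tau$ and minor arcs $\grm$. On $\grm$ the starting estimate is
$$
\int_{\grm}|f|^s|g|^r\,d\alpha \;\leq\; \Bigl(\sup_{\alpha\in\grm}|g(\alpha)|\Bigr)^{r}\int_0^1|f(\alpha)|^s\,d\alpha.
$$
Taking $s=2u$ with $u$ above the Vinogradov threshold $k(k+1)/2$, the mean value $\int_0^1|f|^{2u}d\alpha$ is dominated by the total count of solutions of $m_1^k+\cdots+m_u^k=m_1'^k+\cdots+m_u'^k$ with $m_i,m_i'\leq P$, hence by $J_{u,k}(P)\ll P^{2u-k+\e}$ from Wooley's nested efficient congruencing. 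Combined with a Weyl-type minor arc bound $\sup_{\grm}|g|\ll P^{1-\rho_k}$, the minor arcs contribute $\ll P^{s+r-k-r\rho_k+\e}$. The hypothesis $C(k)\geq \frac{1}{2}\log(k(k+1))+\frac{3}{2}$ forces $\delta_t\leq e^{-2}/(k(k+1))$, so the ratio of this error to the main term is bounded by $P^{-r\rho_k+s\delta_t+\e}$, a negative power of $P$ whenever $r\geq 4$ and $s\leq k^2+O(k)$. This is what produces the bound $P_C(k,r)\leq k^2+O(k)$, with the exact numerical value controlled by the Vinogradov threshold.

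The principal obstacle is the major arc analysis, since $f$ is supported on an arithmetically intricate, sparse set. On a major arc centred at $a/q$ I would split $\mathbf{x}$ according to the residue of $T_t(\mathbf{x})$ modulo $q$ and extract an approximation of the form $f(a/q+\beta)\approx q^{-1}S_{\mathcal{T}_t}(q,a)V(\beta)$, where $V(\beta)$ is an Archimedean integral reflecting the density $P^{-\delta_t}$ of $\mathcal{T}_t$. The distribution of $\mathcal{T}_t$ in residue classes required for this step flows from a secondary circle method applied to the $l$-th power equation that defines $T_t(\mathbf{x})$, a task well within reach because $C(k)\geq 4$ comfortably exceeds the Waring threshold for $l$-th powers in $t=C(k)l$ variables. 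Pairing this with the classical approximation for $g$ yields an asymptotic formula whose singular series converges absolutely as an Euler product; its strict positivity reduces to a check of representability modulo every prime power, which is automatic since $\mathcal{T}_t$ contains $0$ and $1$ and one has the latitude afforded by $r\geq 1$ free $k$-th powers.

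Finally, the sharper statement $R_C(2)\leq 2$ is obtained by inserting the specialised inputs available for squares. The square-root Weyl bound $\sup_{\grm}|g|\ll P^{1/2+\e}$ and Hua's mean value $\int_0^1|g|^4\,d\alpha\ll P^{2+\e}$ drive the minor arc error below the main term with only two auxiliary squares, while local solubility on the major arcs follows from the classical three-squares theorem applied to the residue $n-\sum T_t(\mathbf{x}_i)^2$.
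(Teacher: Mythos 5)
The decisive step in your argument fails quantitatively, and it fails exactly where the paper's novelty lies. On the minor arcs you bound $\int_{\grm}\lvert f\rvert^{s}\lvert g\rvert^{r}\,d\alpha$ by $\bigl(\sup_{\grm}\lvert g\rvert\bigr)^{r}\int_{0}^{1}\lvert f\rvert^{s}\,d\alpha$, which forces $s\geq k(k+1)$ (to reach the Vinogradov threshold for the mean value of $\lvert f\rvert^{s}$) and makes the error-to-main-term ratio $P^{s\delta_{t}-r\rho_{k}+\varepsilon}$. You assert this exponent is negative for $r\geq 4$, but the theorem must hold for the minimal admissible $C(k)$, for which $\delta_{t}\asymp e^{-2}/(k(k+1))$; hence $s\delta_{t}$ is bounded below by a positive constant (about $e^{-2}$ at the stated threshold), while $r\rho_{k}=4\rho_{k}\to 0$ as $k\to\infty$ for any available Weyl-type exponent ($\rho_{k}=2^{1-k}$ classically, $\rho_{k}\approx 1/(k(k-1))$ via Vinogradov-type bounds). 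So with $r=4$ fixed the pointwise saving from four auxiliary $k$-th powers can never absorb the sparsity loss once $k$ is moderately large; this route would require $r\gg k^{2}$, worse even than the trivial bound $R_{C}(k)\leq G(k)$. The paper circumvents precisely this obstruction: it exploits $\mathcal{T}_{t}=\mathcal{T}_{t-l}+\mathcal{T}_{l}$ and works with $\mathcal{F}(\alpha)=\sum_{m\in\mathcal{S}_{2}}\sum_{x\in\mathcal{S}_{1}}e\bigl(\alpha(x+m)^{k}\bigr)$, obtaining via a large-sieve/spacing argument (Vaughan's Lemma 5.3 applied to the well-spaced points $k\alpha m$) together with Proposition \ref{prop1} the mean value $\int_{\grm}\lvert\mathcal{F}\rvert^{k(k+1)}\ll \lvert\mathcal{S}_{1}\rvert^{k(k+1)}\lvert\mathcal{S}_{2}\rvert^{k(k+1)}X^{-k-\varphi_{k}+\varepsilon}$ with $\varphi_{k}\geq 1/2-e^{-1}$, a fixed power saving uniform in $k$ and $l$. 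The four extra $k$-th powers are not spent on the minor arcs at all: they are two integer and two prime Weyl sums $g(\alpha)^{2}h(\alpha)^{2}$ used to prune the wide major arcs $\grM\setminus\grP$, where the approximation to $f$ (Proposition \ref{prop200}) is too weak; this is the source of $R_{C}(k)\leq 4$, and the count of $k(k+1)$ copies of $\mathcal{F}$ plus $4k-3$ copies of $f$ gives $P_{C}(k,r)\leq k^{2}+O(k)$.

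There are also secondary inaccuracies. With $f$ as you define it (a sum over $\mathbf{x}\in\mathbb{N}^{t}$), $f(0)$ has order $P^{t/l}$, not $P^{1-\delta_{t}}$, and $\int_{0}^{1}\lvert f\rvert^{2u}$ is a count weighted by the (very large) representation numbers of elements of $\mathcal{T}_{t}$, so it is not dominated by the unweighted single-equation count; the inequality you invoke is valid only for the indicator sum over $\mathcal{T}_{t}\cap[1,P]$, in which case the lower bound (\ref{1.fi}) enters through the main term and an upper bound on the weights is irrelevant, so the parenthetical justification is circular. For local solubility, $\mathcal{T}_{t}$ consists of sums of $t$ positive $l$-th powers and contains neither $0$ nor $1$; positivity of the singular series is not automatic and in the paper requires Lemma 2.15 of Vaughan together with the auxiliary integer and prime $k$-th powers (Lemma \ref{lem4.3}). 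Similarly, for $k=2$ the paper needs three copies of $f$ (only two degree-two Weyl sums being available) to secure local solubility and convergence, so the claim that two auxiliary squares plus the three-squares theorem suffice is not substantiated.
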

We should emphasize that one could obtain the more precise bound $P_{C}(k,r)\leq k(k+1)$ by introducing suitable weights in the exponential sums that we make use of and exploiting the information provided by such sums on the major arc analysis. We have omitted providing that discussion to make the exposition simpler. The reader might as well want to observe that $R_{C}(k)=0$ is equivalent to Conjecture \ref{con1}, whence the statement containing the relevant information in the above theorem is the upper bound on $R_{C}(k)$. 

Let $G(k)=G_{\mathbb{N}}(k)$ be the smallest number such that for all $s\geq G(k)$, every large enough natural number can be written as a sum of $s$ positive integral $k$-th powers. Vinogradov \cite{Vi}, Karatsuba \cite{K} and Vaughan \cite{Vau3} made progress to achieve upper bounds for $G(k)$, the best current one for large $k$ being \begin{equation}\label{G(k)}G(k)\leq k\Bigg(\log k+\log\log k+2+O\Big(\frac{\log\log k}{\log k}\Big)\Bigg)\end{equation} due to Wooley \cite{Woo2}. Note that as a consequence of this bound one trivially has $$R_{C}(k)\leq k\big(\log k+\log\log k+O(1)\big).$$ The reader then might want to observe that Theorem \ref{thm1.2} improves this bound substantially. It is also worth noting that if Conjecture \ref{con1} were true for any fixed $k$, there would exist some $s=s(k)$ with the property that for any $l\geq 2$, every sufficiently big enough integer $n$ would have a representation of the shape
\begin{equation*}
n=\sum_{i=1}^{s}T_{t}(\mathbf{x}_{i})^{k}
\end{equation*}
with $\mathbf{x}_{i}\in\mathbb{N}^{t}$. Observe that the right side of the above equation would consist of sums of $Cl$ positive integral $l$-th powers gathered in groups and raised to the power $k$ for some constant $C=C(k)>0$ depending on $k$. This problem seems then even harder than the problem of proving that every sufficiently large integer can be written as the sum of $Cl$ positive integral $l$-th powers, which would be a big breakthrough in view of (\ref{G(k)}).

Before describing the other regime for $t$ analysed in the memoir, we note that as a consequence of the aforementioned work on $G(k)$, it follows that whenever $t\geq t_{0}(l)$ with \begin{equation}\label{tfor}t_{0}(l)=\frac{l}{2}\big(\log l+\log\log l+2+o(1)\big)\end{equation} then $\mathcal{T}_{t}$ has positive density, which greatly simplifies things (see, for example Br\"udern, Kawada and Wooley \cite[Theorem 1.5]{BKW}). With the current state of knowledge, this turns out to be the threshold for which we can guarantee to have a lower bound of the shape $\lvert\mathcal{T}_{t}\cap [1,N]\rvert\gg N^{1-\varepsilon}.$ Therefore, for fixed $k$ and $l$ large enough, the cardinality of the sets $\mathcal{A}_{l}=\mathcal{T}_{\xi_{0}(k,l)}\cap [1,N]$ with $\xi_{0}(k,l)=\lceil l/2\big(\log l+\log\big(k(k+1)\big)+2\big)\rceil$ is not known to satisfy $\lvert \mathcal{A}_{l}\rvert\gg N^{1-\varepsilon},$ the best lower bound known being
$$\lvert \mathcal{A}_{l}\rvert\gg N^{1-1/k(k+1)le},$$which is a consequence of (\ref{1.fi}).
\begin{thm}\label{thm1.3}
Let $k,l\geq 2$ and take $\xi\geq \xi_{0}(k,l)$ and $s\geq s_{0}(k)$ with $s_{0}(k)=k^{2}+O(k).$ Then every sufficiently large $n$ can be represented as
$$n=\sum_{i=1}^{s}x_{i}^{k},$$
where $x_{i}\in\mathcal{T}_{\xi}$.
\end{thm}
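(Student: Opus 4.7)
The approach is the Hardy--Littlewood circle method applied to the weighted exponential sum
\[
F(\alpha) = \sum_{\mathbf{y} \in \mathbb{N}^\xi,\, T_\xi(\mathbf{y}) \leq P} e\bigl(\alpha\, T_\xi(\mathbf{y})^k\bigr) = \sum_m r_\xi(m)\, e(\alpha m^k),
\]
with $P = n^{1/k}$ and $r_\xi(m) = \#\{\mathbf{y} \in \mathbb{N}^\xi : T_\xi(\mathbf{y}) = m\}$. The integral $\int_0^1 F(\alpha)^s e(-\alpha n)\, d\alpha$ counts representations of $n$ as $\sum_i T_\xi(\mathbf{y}_i)^k$ weighted by $\prod_i r_\xi(T_\xi(\mathbf{y}_i)) \geq 1$, so positivity of the integral delivers a representation $n = \sum_i x_i^k$ with $x_i = T_\xi(\mathbf{y}_i) \in \mathcal{T}_\xi$.

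First I would dissect $[0,1)$ into major arcs $\mathfrak{M}$ around rationals $a/q$ with $q \leq P^\tau$ for a small $\tau > 0$ and the complementary minor arcs $\mathfrak{m}$. On $\mathfrak{M}$, the standard local approximation of $F$ by a Gauss-type sum and a smooth oscillatory integral yields a main term of the form $\mathfrak{S}(n)\, J(n)\, P^{s\xi/l - k}$; the singular series $\mathfrak{S}(n) \gg 1$ follows from $p$-adic solubility of $\sum_i T_\xi(\mathbf{y}_i)^k = n$ for each prime $p$ (noting that $\mathcal{T}_\xi \supseteq \{y^l : y \geq 1\}$ supplies enough residue classes modulo $p^h$ and that $s \geq s_0(k)$ comfortably exceeds the $p$-adic Waring threshold), while $J(n) \gg 1$ follows from the usual archimedean density computation.

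The technical heart is the minor-arc bound $\int_\mathfrak{m} |F|^s\, d\alpha = o(P^{s\xi/l - k})$, which I would obtain by H\"older interpolation,
\[
\int_\mathfrak{m} |F|^s\, d\alpha \leq \|F\|_{\infty,\mathfrak{m}}^{s - 2t} \int_0^1 |F|^{2t}\, d\alpha,
\]
between a pointwise Weyl bound and a mean-value estimate. For the pointwise bound, freezing all but one coordinate $y_j$ in $\mathbf{y}$ exposes $(y_j^l + H)^k$ as a polynomial of degree $lk$ in $y_j$, to which Weyl's inequality applies, giving $|F(\alpha)| \ll P^{\xi/l - \sigma}$ on $\mathfrak{m}$ for some $\sigma = \sigma(k,l) > 0$. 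For the mean-value estimate $\int_0^1 |F|^{2t}\, d\alpha \ll P^{2t\xi/l - k + \varepsilon}$, one rewrites the integral as a count of solutions to $\sum_{i=1}^t T_\xi(\mathbf{y}_i)^k = \sum_{i=1}^t T_\xi(\mathbf{y}'_i)^k$ and combines Vinogradov's mean-value theorem for $k$-th powers (the engine behind $G(k) \leq k^2 + O(k)$) with the $L^2$ estimate $\sum_m r_\xi(m)^2 \ll P^{2\xi/l - 1 + \varepsilon}$ provided by the Corollary to Theorem~2.1 of~\cite{Woo1}, the same ingredient underlying the density bound of the shape~(\ref{1.fi}). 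Putting everything together gives $\int_\mathfrak{m} |F|^s\, d\alpha \ll P^{s\xi/l - k - (s - 2t)\sigma + \varepsilon}$, which is negligible next to the major-arc main term whenever $s > 2t$; this is ensured for $s \geq s_0(k) = k^2 + O(k)$.

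The main obstacle is the mean-value estimate: the multinomial expansion of $T_\xi(\mathbf{y})^k$ renders $F$ a polynomial exponential sum in $\xi$ variables with many cross-terms, so directly translating the classical Vinogradov bounds is delicate. A useful device is the Fourier identity
\[
F(\alpha) = \int_0^1 h(\beta)^\xi\, \widetilde{g}(\alpha, \beta)\, d\beta,
\]
with $h(\beta) = \sum_{y \leq P^{1/l}} e(\beta y^l)$ and $\widetilde{g}(\alpha, \beta) = \sum_{m \leq P} e(\alpha m^k - \beta m)$, which decouples the outer $k$-th power cancellation (handled by Weyl's inequality for degree $k$ applied uniformly in $\beta$ to $\widetilde{g}$) from the inner $l$-th power averaging (handled by mean-value bounds on moments of $h$, which are available precisely in the regime $\xi \geq \xi_0(k,l)$).
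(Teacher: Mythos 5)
The decisive step in your argument --- the mean value estimate $\int_{0}^{1}\lvert F(\alpha)\rvert^{2t}\,d\alpha\ll P^{2t\xi/l-k+\varepsilon}$ with $2t$ near $k(k+1)$ --- is precisely the statement that is not available, and neither of your suggested routes delivers it. First, the $L^{2}$ input is misquoted: the corollary to Theorem 2.1 of \cite{Woo1} gives $\sum_{m}r_{\xi}(m)^{2}\ll P^{2\xi/l-1+\delta}$ with a \emph{fixed} positive $\delta$ of size about $1/(e\,lk(k+1))$ at $\xi=\xi_{0}(k,l)$ (equivalently $\lvert\mathcal{T}_{\xi}\cap[1,P]\rvert\gg P^{1-\delta}$, cf.\ (\ref{cono}) and (\ref{1.fi})), not with an arbitrary $\varepsilon$; the whole difficulty emphasized in the introduction is that $\mathcal{T}_{\xi}$ is not known to have density $P^{1-\varepsilon}$. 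Second, there is no standard mechanism combining Vinogradov's mean value theorem for $k$-th powers with that $L^{2}$ bound to control the single degree-$k$ equation with the sparse weights $r_{\xi}(m)$ at the critical exponent; the relevant tool is the weighted restriction estimate of Corollary 1.4 of \cite{Woo4} (this is what Proposition \ref{prop1} encodes), and even the optimized form of your H\"older scheme --- that restriction estimate plus a pointwise minor-arc bound (Lemma 5.4 of \cite{Vau}) --- is stated in the introduction to cost $s\geq(3/2)k^{2}+O(k)$, not $k^{2}+O(k)$. If instead you keep the fixed sparseness loss in the mean value, a power of size roughly $P^{k(k+1)\delta/2}\approx P^{1/(2el)}$, and try to beat it with extra variables estimated by your pointwise bound, note that freezing $\xi-1$ coordinates and Weyl-differencing a degree-$lk$ polynomial (this is Lemma \ref{lem2}) saves only an exponent of order $2^{1-kl}$; one then needs on the order of $2^{kl}/l$ extra variables, so $s$ blows up with $l$ and the required uniformity of $s_{0}(k)$ in $l$ is lost. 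The decoupling identity at the end does not repair this: after taking absolute values in $\beta$ it again reduces to moments of sparse weighted degree-$k$ sums at the critical exponent.

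For contrast, the paper never uses a pointwise bound on the sparse generating function over the minor arcs. It writes elements of $\mathcal{T}_{\xi}$ as $x+p^{l}$ with $x\in\mathcal{S}_{\xi-1}(P_{3})$ a sum of smooth $l$-th powers and $p$ prime, expands $\alpha(x+p^{l})^{k}$ binomially so that the shift enters only through the tuple $\gamma(p^{l})\in[0,1)^{k-1}$, proves a spacing property of these points modulo $1$ on the minor arcs, and applies the large sieve (Lemma 5.3 of \cite{Vau}) together with the weighted Vinogradov mean value of Proposition \ref{prop1}. This yields Proposition \ref{prop223}: a minor-arc mean value for only $k(k+1)$ copies of $\mathcal{G}$ carrying an extra factor $M^{-1}$, which is what beats the fixed sparseness loss $X^{\Delta_{\xi_{1}}}$ uniformly in $l$. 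The remaining $O(k)$ variables are copies of the full weighted sum $f(\alpha)$, exploited only on the major arcs (where the weak approximation of Proposition \ref{prop200} forces the pruning to $\grN_{\iota}$, and positivity of $\mathfrak{S}'(n)$ comes from Proposition \ref{Prop4}). Without an ingredient of this kind, your dissection-plus-H\"older argument cannot reach $s_{0}(k)=k^{2}+O(k)$ uniformly in $l$, so the proposal as written does not prove the theorem.
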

The reader may want to observe that even if the sets $\mathcal{T}_{\xi}$ are not known to satisfy the estimate $\lvert\mathcal{T}_{\xi}\cap [1,N]\rvert\gg N^{1-\varepsilon}$, the bound on the number $s$ of variables needed does not depend on $l$. This suggests that one should search for ideas which don't just make use of the polynomial structure of the sets $\mathcal{T}_{\xi}$ in order to prove such result. Experts in the area may also notice that one could prove a weaker version of the theorem by combining Corollary 1.4 of Wooley \cite{Woo4} with a pointwise bound over the minor arcs derived from Lemma 5.4 of Vaughan \cite{Vau}. This strategy though would entail the restriction $s\geq (3/2)k^{2}+O(k)$. We instead use a similar idea than the one we employ for the minor arc treatment in the proof of Theorem \ref{thm1.2} that avoids relying on such pointwise bounds and enables us to win $k(k+1)/2$ variables. It is worth mentioning that one could also introduce suitable weights in the exponential sum that we make use of to obtain a more precise error term in the expression for $s_{0}(k)$.

Back to equation (\ref{solu}), the case when the set $\mathcal{S}$ is taken to be the prime numbers has been of interest to many mathematicians. Among others, Hua first (\cite{Hua}, \cite{Hua2}) and then Thanigasalam (\cite{Tha1}, \cite{Tha2}),  Kumchev \cite{Kum},  Kawada and Wooley \cite{KW}, and Kumchev and Wooley (\cite{KW1}, \cite{KW2}) have worked to give upper bounds for $H(k)$, where $H(k)$ is defined as the minimum number such that for every $s\geq H(k)$, the equation $$n=p_{1}^{k}+\dots+p_{s}^{k}$$ has a solution for all sufficiently large $n$ with the property that $n\equiv s\pmod{K(k)},$ where $K(k)$ is a constant defined in terms of $K(k)$ to ensure appropiate local solubility conditions (see \cite{KW1} for a more precise definition of $K(k)$). We note that the best current bound for large $k$ is $H(k)\leq (4k-2)\log k-(2\log 2-1)k-3$ due to Kumchev and Wooley \cite{KW2}.

At the same time, some authors have been trying to find sparse sets with minimum density such that the problem of representing every sufficiently large positive number as a sum of $k$-th powers of elements of the set is still soluble with strong upper bounds for the number of variables needed. This other approach in Waring's problem has been studied by Nathanson
\cite{Nat}, where he used the probabilistic method to prove that for all $s\geq G(k)+1,$ there exist sets $A$ with $\text{card}(A\cap [1,N])\sim c N^{1-1/s+\varepsilon}$  such that (\ref{solu}) is soluble on $A$. The result was partially improved by Vu \cite{Vu}, when he showed under the condition $s\geq k^{4}8^{k}$ the existence of a set $A\subset\mathbb{N}$ with $\text{card}(A\cap[1,N])=\Theta\big(N^{k/s}(\log N)^{1/s}\big)$ such that $R_{A}(n)\asymp \log n,$ where $R_{A}(n)$ denotes the number of solutions of (\ref{solu}) with the variables lying in $A$. Later on, Wooley \cite{Woo3} proved the same result for $s\geq T(k)+2$, where $T(k)$ is bounded above by an explicit version of the right-hand side of (\ref{G(k)}).  However, though the size of the sets is near optimal, the arguments used by the authors are probabilistic, so they don't give a description of those sets. Therefore, the approach of this paper might be the first one in which by giving an explicit family of sets with similar density, one tries to find a uniform bound for the number of variables needed to solve (\ref{solu}), as discussed at the beginning of the introduction.

Theorems \ref{thm1.2} and \ref{thm1.3} are proved via the circle method, and the exposition is organised as follows. We bound a mean value via restriction estimates in Section \ref{sec2} and we expose the key argument which permits us to estimate mean values of a suitable exponential sum over the minor arcs uniformly on $l$. Section \ref{sec3} is devoted to a brief study of the singular series. In Section \ref{sec4} we approximate the generating function for the problem over the major arcs. We give an asymptotic formula for the integral of the product of some exponential sums over the major arcs in Section \ref{sec5} and we use it to complete the proof of Theorem \ref{thm1.2}. We have included a small note in Section \ref{sec6} that deals with the case $k=2$. In Section \ref{sec7} we slightly modify the exponential sum taken in Section \ref{sec2} and use a similar argument to obtain a suitable estimate for the contribution of the minor arcs in the setting of Theorem \ref{thm1.3}. We combine such work with a standard major arc analysis to prove the theorem.

For the rest of the paper, we fix positive integers $l\geq 2$ and $k\geq 2$. For the sake of simplicity concerning local solubility, we assume that $t\geq 4l$, though most of the results throughout the paper don't require this restriction. For ease of notation we also write $T(\mathbf{x})$ instead of $T_{t}(\mathbf{x})$. The main objective of Theorem \ref{thm1.2} is to prove a non-trivial uniformity bound for $S_{C}(k,l,r)$, and thus we just focus our attention in large values of $l$ in terms of $k$. As mentioned above, even if we provide explicit bounds for $P_{C}(k,r)$ and $C(k)$, the relevant part of the result is the estimate on $R_{C}(k)$. For such purposes, we haven't included an investigation of the behaviour of $P_{C}(k,r)$ and $C(k)$ for small $k$.

As usual in analytic number theory, we denote $e^{2\pi i z}$ by $e(z)$, and for every $q\in\mathbb{N}$, we put $e_{q}(z)=e^{2\pi i z/q}$. When we write $a\leq \mathbf{x}\leq b$ for a vector $\mathbf{x}=(x_{1},\ldots,x_{s})\in\mathbb{R}^{s}$ we will mean that $a\leq x_{i}\leq b$ for all $1\leq i\leq s.$ We denote $\mathbf{x}\equiv \mathbf{y}\pmod{q}$ when $x_{i}\equiv y_{i}\pmod{q}$ for all $1\leq i\leq s.$ We write $p^{r}|| n$ to denote that $p^{r}| n$ but $p^{r+1}\nmid n.$ Whenever $\varepsilon$ appears in any bound, it will mean that the bound holds for every $\varepsilon>0$, though the implicit constant then may depend on $\varepsilon$. We adopt the convention that when we write $\delta$ in the computations we mean that there exists a positive constant such that the bound holds. We use $\ll$ and $\gg$ to denote Vinogradov's notation. 

\emph{Acknowledgements}: The author's work was supported in part by a European Research Council Advanced
Grant under the European Union's Horizon 2020 research and innovation programme via grant agreement No. 695223 during his studies at the University of Bristol. It was completed while the author was visiting Purdue University under Trevor Wooley's supervision. The author would like to thank him for his guidance and helpful comments, and both the University of Bristol and Purdue University for their support and hospitality.

\section{Minor arc estimate}\label{sec2}
We will begin by displaying an upper bound for mean values of an exponential sum which will be of later use in the analysis of the minor arcs in the setting of both Theorems \ref{thm1.2} and \ref{thm1.3}. This will be a straightfoward consequence of the work of Wooley \cite{Woo4} on Vinogradov's mean value theorem with weights. 
Let $r\in \mathbb{N}$, let $Y>0$ be a real parameter and consider the set
\begin{equation}\label{Sr}\mathcal{S}_{r}(Y)=\Big\{x_{1}^{l}+\ldots+x_{r}^{l}:\ \ \ x_{i}\in\mathcal{A}(Y,Y^{\eta}),\ \ (1\leq i\leq r) \Big\},\end{equation} where
\begin{equation*}\mathcal{A}(Y,R)=\{n\in [1,Y]\cap \mathbb{N}: p\mid n\text{ and $p$ prime}\Rightarrow p\leq R\}\end{equation*} and $\eta$ is a sufficiently small but positive parameter. Note that then the corollary to Theorem 2.1 of  Wooley \cite{Woo1} and a routine argument using Cauchy's inequality yield \begin{equation}\label{1.1}\lvert \mathcal{S}_{r}(Y)\rvert \gg Y^{l-l\delta_{r}},\end{equation}where $\delta_{r}$ was defined in (\ref{cono}). In order to make further progress we need to introduce first some notation. Let $n$ be a positive integer and take $X=n^{1/k}$ and $P=X^{1/l}.$ Define for $\alpha\in [0,1)$ and $\bfalpha\in[0,1)^{k}$ the exponential sums
\begin{equation}\label{falphap}f\big(\alpha,\mathcal{S}_{r}(Y)\big)=\sum_{x\in \mathcal{S}_{r}(Y)}e(\alpha x^{k}),\  \ \ \ \ \ \ \ \ f\big(\bfalpha,\mathcal{S}_{r}(Y)\big)=\sum_{x\in \mathcal{S}_{r}(Y)}e(\alpha_{1}x+\ldots+\alpha_{k}x^{k}).\end{equation} For future purposes in the analysis, we consider the mean value
\begin{equation}\label{ec2.9}J_{s,r}^{(k)}(Y)=\int_{[0,1)^{k}}\big\lvert f\big(\bfalpha,\mathcal{S}_{r}(Y)\big)\big\rvert^{2s} d\bfalpha,\end{equation} which by orthogonality counts the solutions to the system $$x_{1}^{j}+\ldots+x_{s}^{j}=x_{s+1}^{j}+\ldots+x_{2s}^{j}\ \ \ \ \ \ \ \ \ \ (1\leq j\leq k),$$ where $x_{i}\in\mathcal{S}_{r}(Y).$

\begin{prop}\label{prop1}
Let $s\geq k(k+1)/2.$ Then one has that
\begin{equation*}J_{s,r}^{(k)}(Y) \ll \lvert \mathcal{S}_{r}(Y)\rvert ^{2s}Y^{-lk(k+1)/2+l\Delta_{r}+\varepsilon},\end{equation*}
where $\Delta_{r}=\delta_{r}k(k+1)/2$.
\end{prop}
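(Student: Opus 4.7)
The plan is to derive the estimate as a direct application of Wooley's weighted Vinogradov mean value theorem (Corollary 1.4 of \cite{Woo4}), combined with the lower bound \eqref{1.1} on $|\mathcal{S}_r(Y)|$.

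First I would note that by orthogonality, $J_{s,r}^{(k)}(Y)$ counts tuples $(x_1,\ldots,x_{2s})\in\mathcal{S}_r(Y)^{2s}$ satisfying the translation-invariant system
$$\sum_{i=1}^{s}x_i^{j}=\sum_{i=1}^{s}x_{s+i}^{j}\quad(1\leq j\leq k).$$
Recasting $f(\bfalpha,\mathcal{S}_r(Y))=\sum_{n\leq rY^{l}}\omega(n)\,e(\alpha_1 n+\ldots+\alpha_k n^{k})$ as a weighted Weyl sum on $[1,rY^{l}]$ with weight $\omega=\mathbf{1}_{\mathcal{S}_r(Y)}$, the task reduces to bounding a weighted Vinogradov mean value. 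The crucial structural input is that the support of $\omega$ is the image of the smooth tuples $\mathbf{y}\in\mathcal{A}(Y,Y^{\eta})^{r}$ under the polynomial map $\mathbf{y}\mapsto y_1^{l}+\ldots+y_r^{l}$, which is precisely the sort of smooth parametrisation accommodated by the efficient congruencing apparatus underpinning \cite{Woo4}.

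Appealing to Corollary 1.4 of Wooley \cite{Woo4}, for $s\geq k(k+1)/2$ one expects a diagonal-type bound of the shape
$$J_{s,r}^{(k)}(Y)\ll Y^{\varepsilon}\,|\mathcal{S}_r(Y)|^{2s-k(k+1)/2}.$$
Note that this is sharp up to the factor $Y^{\varepsilon}$ at the endpoint $s=k(k+1)/2$, by virtue of the trivial diagonal lower bound $J_{s,r}^{(k)}(Y)\gg|\mathcal{S}_r(Y)|^{s}$ arising from paired solutions.

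Finally, to bring the estimate into the form stated in the proposition, I would insert the lower bound $|\mathcal{S}_r(Y)|\gg Y^{l(1-\delta_r)}$ supplied by \eqref{1.1} into the factor $|\mathcal{S}_r(Y)|^{-k(k+1)/2}$, obtaining
$$|\mathcal{S}_r(Y)|^{-k(k+1)/2}\ll Y^{-l(1-\delta_r)k(k+1)/2}=Y^{-lk(k+1)/2+l\Delta_r},$$
and hence
$$J_{s,r}^{(k)}(Y)\ll|\mathcal{S}_r(Y)|^{2s}Y^{-lk(k+1)/2+l\Delta_r+\varepsilon},$$
as required. The principal obstacle is verifying that Wooley's Corollary 1.4 does apply to the weight $\mathbf{1}_{\mathcal{S}_r(Y)}$ and does yield the diagonal pre-factor $|\mathcal{S}_r(Y)|^{2s-k(k+1)/2+\varepsilon}$; this rests on the compatibility of the smooth parametrisation of $\mathcal{S}_r(Y)$ via $\mathcal{A}(Y,Y^{\eta})^{r}$ with the hypotheses governing the weighted mean value estimates in \cite{Woo4}.
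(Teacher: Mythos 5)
Your argument is essentially identical to the paper's proof: both interpret $f(\bfalpha,\mathcal{S}_{r}(Y))$ as a weighted Weyl sum with the indicator weight of $\mathcal{S}_{r}(Y)$ supported on $[1,rY^{l}]$, invoke Corollary 1.4 of Wooley \cite{Woo4} to obtain the bound $\lvert\mathcal{S}_{r}(Y)\rvert^{2s-k(k+1)/2}Y^{\varepsilon}$, and then insert the lower bound (\ref{1.1}) to reach the stated estimate. Your closing worry is unnecessary, since that corollary is a restriction-type estimate valid for arbitrary weights, so no smooth parametrisation hypothesis needs checking; otherwise the proposal matches the paper's proof.
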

\begin{proof}
Define the weights $a_{x}=1$ if $x\in \mathcal{S}_{r}(Y)$ and $0$ otherwise. Then, we can rewrite $f(\bfalpha,\mathcal{S}_{r}(Y))$ as
$$f(\bfalpha,\mathcal{S}_{r}(Y))=\sum_{x\leq rY^{l}}a_{x}e(\alpha_{1}x+\ldots+\alpha_{k}x^{k}).$$ Therefore, combining Corollary 1.4 of Wooley \cite{Woo4} with (\ref{1.1}) and the triangle inequality we obtain $$J_{s,r}^{(k)}(Y)\ll \lvert\mathcal{S}_{r}(Y)\rvert ^{2s}\lvert \mathcal{S}_{r}(Y)\rvert^{-k(k+1)/2}Y^{\varepsilon}\ll\lvert \mathcal{S}_{r}(Y)\rvert ^{2s}Y^{-lk(k+1)/2+l\Delta_{r}+\varepsilon},$$ which yields the desired result. 
\end{proof}

Before introducing the main ingredients for the minor arc analysis, we recall from the introduction that whenever $t\geq t_{0}(l)$, where $t_{0}(l)$ was defined in (\ref{tfor}), then $\mathcal{T}_{t}$ has positive density. We deliberately avoid this situation by considering $l$ sufficiently large in terms of $k$. The difficulty of the Conjecture \ref{con1} then lies on the fact that $\mathcal{T}_{t}$ is not known to have positive density, and the best lower bounds available on the cardinality of the set are not strong enough. Moreover, any approach making use of the fact that $T_{t}(\mathbf{x})^{k}$ is a polynomial of degree $kl$ and applying a Weyl estimate for the corresponding exponential sum or any multivariable version of Vinogradov's Mean Value Theorem (see Theorem 1.3 and Theorem 2.1 of \cite{PPW}) would entail a restriction in the number of variables that would depend on the degree of the polynomial. 

We make though some progress by obtaining a uniform bound in $l$ of a suitable exponential sum over the minor arcs. Our argument here is motivated by the treatment of Vaughan \cite[Chapter 5]{Vau}, and it requires the estimate in Proposition \ref{prop1}. For such purposes, we introduce first some notation. Consider a positive integer-valued function $C(k)$ and set \begin{equation*}\label{tes}t=C(k)l,\ \ \ \ \ \ \ \ \ t_{1}=C(k)l-l.\end{equation*} We take the parameter \begin{equation}\label{varphi}\varphi_{k,t_{1}}=1-k(k+1)\delta_{t_{1}}/2-e^{-1},\end{equation} which for the sake of concision will be denoted by $\varphi_{k}$. Observe that whenever $C(k)$ satisfies the lower bound included in the hypothesis of Theorem \ref{thm1.2} then one has $\varphi_{k}\geq 1/2-e^{-1}>0.$ We also define the constants
\begin{equation}\label{C1}C_{1}=\big(k(k+1)2^{k+1}t_{1}^{k}\big)^{-1/lk},\ \ \ \ \ \ \ C_{2}=\min\Big((2lk)^{-1/l},\big(k(k+1)2^{k+1}l^{k}\big)^{-1/lk}\Big).\end{equation}
and the natural numbers
\begin{equation}\label{Pt}P_{1}=\lfloor C_{1}P\rfloor,\ \ \ \ P_{2}=\lfloor C_{2}P\rfloor.\end{equation}For ease of notation, we denote $\mathcal{S}_{t_{1}}(P_{1}),$ and $\mathcal{S}_{l}(P_{2})$ by $\mathcal{S}_{1},$ and $\mathcal{S}_{2}$ respectively. It is then convenient to define, for $m\in\mathcal{S}_{2},$ the exponential sums
\begin{equation*}f_{m}(\alpha)=\sum_{x\in\mathcal{S}_{1}}e\big(\alpha(x+m)^{k}\big)\ \ \ \ \ \ \ \  \text{and}\ \ \ \ \ \ \ \mathcal{F}(\alpha)=\sum_{m\in \mathcal{S}_{2}}f_{m}(\alpha).\end{equation*}
In order to make further progress we make a Hardy-Littlewood dissection. When $1\leq Q\leq X$ we define the major arcs $\grM(Q)$ to be the union of 
\begin{equation}\label{grM}\grM(a,q)=\Big\{ \alpha\in [0,1): \lvert \alpha-a/q\rvert \leq \frac{Q}{qn}\Big\}\end{equation} with $0\leq a\leq q\leq Q$ and $(a,q)=1$. For the sake of brevity we write
$$\grM=\grM(X),\ \ \ \ \ \ \ \ \grN=\grM(P^{1/2}),\ \ \ \ \ \ \ \ \grP=\grM(\log P)$$and we take $\grm=[0,1)\setminus \grM$ and $\grn=[0,1)\setminus \grN$ to be the minor arcs. 

\begin{prop}\label{prop22}
Let $\alpha\in \grm$. Then one has
$$\mathcal{F}(\alpha)\ll |\mathcal{S}_{1}||\mathcal{S}_{2}|X^{-\varphi_{k-1}/k(k-1)+\varepsilon},$$ where $\varphi_{k-1}$ was defined in (\ref{varphi}). 
Moreover, for $s\geq k(k+1)/2$ we obtain the mean value estimate
\begin{equation*}\int_{\grm}\lvert\mathcal{F}(\alpha)\rvert^{2s}d\alpha\ll |\mathcal{S}_{1}|^{2s}|\mathcal{S}_{2}|^{2s}X^{-k-\varphi_{k}+\varepsilon}.\end{equation*}
\end{prop}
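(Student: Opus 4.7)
The plan is to exploit the binomial identity $(x+m)^k=\sum_{j=0}^{k}\binom{k}{j}x^{j}m^{k-j}$, which allows one to rewrite $f_m(\alpha)=e(\alpha m^k)\,f(\bfbeta_m,\mathcal{S}_1)$ with $\bfbeta_m=\bigl(\binom{k}{j}\alpha m^{k-j}\bigr)_{j=1}^{k}$, thereby embedding $\mathcal{F}(\alpha)$ into the $k$-dimensional framework of Proposition~\ref{prop1}. An application of H\"older's inequality furnishes the common starting relation
\[
|\mathcal{F}(\alpha)|^{2s}\leq |\mathcal{S}_2|^{2s-1}\sum_{m\in\mathcal{S}_2}|f(\bfbeta_m,\mathcal{S}_1)|^{2s},\qquad s\geq 1,
\]
which drives both estimates.

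For the pointwise bound, the condition $\alpha\in\grm$ transfers, through the coordinates of $\bfbeta_m$, to a Diophantine condition on $f(\bfbeta_m,\mathcal{S}_1)$. A Weyl-type dissection paired with the $(k-1)$-dimensional analogue of Proposition~\ref{prop1} (whose proof is identical) should then deliver $|f(\bfbeta_m,\mathcal{S}_1)|\ll |\mathcal{S}_1|X^{-\varphi_{k-1}/k(k-1)+\varepsilon}$ uniformly in $m$, whence the claim on $\mathcal{F}(\alpha)$ follows by trivial summation over $m$.

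For the mean value estimate, I would take $2s_0=k(k+1)$, integrate the H\"older inequality, and interpret $\sum_m\int_0^1 |f_m(\alpha)|^{2s_0}\,d\alpha$ via orthogonality as the count of triples $(\mathbf{x},\mathbf{y},m)\in\mathcal{S}_1^{2s_0}\times\mathcal{S}_2$ with $\sum_{i=1}^{s_0}(x_i+m)^k = \sum_{i=1}^{s_0}(y_i+m)^k$. Re-expanding in powers of $m$ turns this equation into $P_{\mathbf{x},\mathbf{y}}(m)=0$, where $P_{\mathbf{x},\mathbf{y}}(z)=\sum_{l=1}^{k}\binom{k}{l}z^{k-l}\sigma_l$ and $\sigma_l=\sum_i(x_i^l-y_i^l)$. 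Tuples inside the full Vinogradov system $\sigma_1=\cdots=\sigma_k=0$ produce $P\equiv 0$ and thus contribute $|\mathcal{S}_2|\cdot J_{s_0,t_1}^{(k)}(P_1)$, a quantity controlled by Proposition~\ref{prop1}; all remaining tuples yield a non-zero $P$ of degree at most $k-1$ and hence at most $k-1$ roots in $\mathcal{S}_2$, so their total contribution is bounded by $(k-1)|\mathcal{S}_1|^{2s_0}$. The advertised mean value bound should then drop out of the convexity inequality
\[
\int_\grm|\mathcal{F}(\alpha)|^{2s}\,d\alpha\leq\Bigl(\sup_{\alpha\in\grm}|\mathcal{F}(\alpha)|\Bigr)^{2s-k(k+1)}\int_0^1|\mathcal{F}(\alpha)|^{k(k+1)}\,d\alpha,
\]
combined with the pointwise bound.

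The hard part will be the interaction between the off-diagonal $(k-1)|\mathcal{S}_1|^{2s_0}$ term produced by the orthogonality decomposition and the pointwise bound fed into the convexity inequality: a careful arithmetic check is needed to verify that the saving $X^{-\varphi_{k-1}/k(k-1)}$ raised to the power $2s-k(k+1)$ absorbs the apparent deficit of order $X^{k-\Delta_{t_1}}/|\mathcal{S}_2|$ carried by the off-diagonal term. This will rest on the lower bounds $|\mathcal{S}_1|\gg X^{1-\delta_{t_1}}$ and $|\mathcal{S}_2|\gg X^{1-1/e-\varepsilon}$ supplied by \eqref{1.1}, together with the hypothesis on $C(k)$ that keeps $\varphi_k$ and $\varphi_{k-1}$ positive; a more refined bookkeeping---possibly a tube/thickening argument relating $\sum_m|f(\bfbeta_m,\mathcal{S}_1)|^{2s}$ directly to the $k$-dimensional integral $J_{s,t_1}^{(k)}(P_1)$---may be required to close the remaining gap.
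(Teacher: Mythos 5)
Your opening moves (the binomial expansion of $\alpha(x+m)^{k}$ and H\"older over $m\in\mathcal{S}_{2}$) coincide with the paper's, but both halves of your argument then break. For the pointwise bound you assert that $\alpha\in\grm$ forces $\lvert f(\bfbeta_{m},\mathcal{S}_{1})\rvert\ll\lvert\mathcal{S}_{1}\rvert X^{-\varphi_{k-1}/k(k-1)+\varepsilon}$ \emph{uniformly in a single $m$}, to be extracted from "a Weyl-type dissection paired with Proposition \ref{prop1}". No such pointwise estimate is available: Proposition \ref{prop1} is a mean value over $\bfalpha\in[0,1)^{k}$ and says nothing about the sum at one specific point $\bfbeta_{m}$, and there is no Weyl-differencing bound for an exponential sum over the structureless set $\mathcal{S}_{1}$ whose saving is uniform in $l$ (using the degree-$kl$ polynomial structure, as in Lemma \ref{lem2}, only saves $2^{1-kl}$). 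This is precisely the obstruction the paper discusses before Proposition \ref{prop22}. The actual mechanism is that the saving is an average-over-$m$ phenomenon: for fixed $\alpha\in\grm$ one shows via Dirichlet's theorem (with $X<q\le X^{k-1}$) and $m\le X/2k$ that the points $\gamma_{k-1}(m)=k\alpha m$ are spaced $\gg X^{1-k}$ modulo $1$ as $m$ ranges over $\mathcal{S}_{2}$, and then Vaughan's large sieve inequality (Lemma 5.3 of \cite{Vau}) gives $\sum_{m\in\mathcal{S}_{2}}\lvert f_{m}(\alpha)\rvert^{2s}\ll X^{k(k-1)/2}\sum_{\bfn}\lvert a(\bfn)\rvert^{2}\ll X^{k(k-1)/2}J_{s,t_{1}}^{(k-1)}(P_{1})$; only after this does Proposition \ref{prop1} (with $s=k(k-1)/2$) and \eqref{1.1} produce the stated exponent. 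The cancellation cannot be attributed to each individual $f_{m}$.

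The mean value half has a quantitative failure you already sensed. Your orthogonality count over the full interval $[0,1)$ gives $\sum_{m}\int_{0}^{1}\lvert f_{m}\rvert^{k(k+1)}\,d\alpha\ll\lvert\mathcal{S}_{2}\rvert J_{s_{0},t_{1}}^{(k)}(P_{1})+(k-1)\lvert\mathcal{S}_{1}\rvert^{k(k+1)}$, and the off-diagonal term $(k-1)\lvert\mathcal{S}_{1}\rvert^{k(k+1)}$ dominates by a factor of roughly $X^{s_{0}-1}$; discarding the restriction to $\grm$ at this stage lets the major-arc-type solutions (where $m$ is a root of the nonzero polynomial) swamp the count. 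Your convexity step then needs the factor $\bigl(\sup_{\grm}\lvert\mathcal{F}\rvert/\lvert\mathcal{S}_{1}\rvert\lvert\mathcal{S}_{2}\rvert\bigr)^{2s-k(k+1)}$ to recover a saving of size about $X^{k-1+1/e+\varphi_{k}}$, which is impossible at the critical exponent $s=k(k+1)/2$ (the exponent $2s-k(k+1)$ is zero) and otherwise forces $s\gg k^{3}$; but the proposition is claimed, and later used in Proposition \ref{propthm} with $H=k(k+1)$, exactly at $s=k(k+1)/2$. The paper avoids this by keeping $\alpha\in\grm$ throughout: the large sieve bound above holds pointwise on $\grm$, and integrating $\sum_{\bfn}\lvert a(\bfn)\rvert^{2}$ over $\alpha$ then upgrades the system from degree $k-1$ to degree $k$ (the phases $e(\alpha(x_{1}^{k}+\cdots+x_{s}^{k}))$ inside $a(\bfn)$ supply the $k$-th equation), yielding $\int_{\grm}\lvert\mathcal{F}\rvert^{2s}\,d\alpha\ll\lvert\mathcal{S}_{2}\rvert^{2s-1}X^{k(k-1)/2}J_{s,t_{1}}^{(k)}(P_{1})$, after which Proposition \ref{prop1} with $s\ge k(k+1)/2$ and \eqref{1.1} close the argument. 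Without that device your proposal does not prove either assertion of the proposition.
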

As experts will realise throughout the proof, one could obtain a similar result for the analogous Vinogradov generating function by using ideas of the proof of Theorem 5.2 of Vaughan \cite{Vau}. We have ommited such analysis for the clarity of the exposition.
\begin{proof}
For every $m\in\mathcal{S}_{2}$ consider $\gamma(m)=(\gamma_{1}(m),\ldots,\gamma_{k-1}(m))$, where the entries taken are
\begin{equation}\label{ec1.33}\gamma_{j}(m)=\alpha\binom{k}{j}m^{k-j},\ \ \ \ \ \ (1\leq j\leq k-1).\end{equation}
Observe that then for every $x\in\mathcal{S}_{1}$ one obtains the relation
\begin{equation*} \alpha(x+m)^{k}=\bfnu^{(k-1)}(x)\cdot \gamma(m)+\alpha x^{k}+\alpha m^{k},\end{equation*} where we adopted the notation $\bfnu^{(k-1)}(x)=(x,\ldots,x^{k-1}).$ It is also convenient to define for $s\in\mathbb{N}$ the set of $(k-1)$-tuples of natural numbers
$$\mathcal{N}=\Big\{(n_{1},\ldots,n_{k-1}):\ \ 1\leq n_{i}\leq sX^{i},\ \ (1\leq i\leq k-1)\Big\}.$$
By using the above relation we find that
\begin{equation}\label{ec2.8}\sum_{m\in\mathcal{S}_{2}}\lvert f_{m}(\alpha)\rvert^{2s}=\sum_{m\in\mathcal{S}_{2}}\Big\lvert\sum_{\bfn\in\mathcal{N}}a(\bfn)e\big(\bfn\cdot\gamma(m)\big)\Big\rvert^{2},\end{equation}
where on denoting
$$\mathcal{X}(\bfn)=\Big\{\mathbf{x}\in\mathcal{S}_{1}^{s}:\ x_{1}^{i}+\ldots+x_{s}^{i}=n_{i}, \ \ \ (1\leq i\leq k-1)\Big\}$$ the coefficient $a(\bfn)$ is defined as 
\begin{equation}\label{a(n}a(\bfn)=\sum_{\mathbf{x}\in \mathcal{X}(\bfn)}e\big(\alpha(x_{1}^{k}+\ldots+x_{s}^{k})\big).\end{equation}

We devote the rest of the proof to apply a version of the large sieve inequality (Lemma 5.3 of Vaughan \cite{Vau}) to the right side of (\ref{ec2.8}). For such purpose, we shall consider the spacing modulo $1$ of $\{\gamma(m)\}_{m}.$ Take $x,y\in\mathcal{S}_{2}$ with $x\neq y$. Note that in view of (\ref{C1}) then one has\begin{equation}\label{z}x,y\leq X/2k.\end{equation} Observe that applying Dirichlet's approximation to each $\alpha\in\grm$ we obtain $a\in\mathbb{Z}$ and $q\in\mathbb{N}$ with $(a,q)=1$ such that $0\leq a\leq q$ and $$\lvert \alpha-a/q\rvert \leq q^{-1}X^{1-k},$$ where $X<q\leq X^{k-1}.$ Note as well that by the choice of $\gamma_{j}(m)$ in (\ref{ec1.33}) we find that
$$\|k\alpha(x-y)\|=\|\gamma_{k-1}(x)-\gamma_{k-1}(y)\|.$$
Then by the above discussion we obtain the lower bound
$$\|k\alpha(x-y)\|\geq \|ka(x-y)/q\|-(2q)^{-1}X^{2-k}.$$
Note that the only instance in which the first term on the right-hand side of the above equation can be $0$ is when $y=x+nq(q,k)^{-1}$ for some $n\in\mathbb{N}$ with $n\neq 0$. However, $q(q,k)^{-1}\geq X/k,$ which would contradict (\ref{z}). Therefore, by the preceding discussion we get
$$\|k\alpha(x-y)\|\geq (2q)^{-1},$$which delivers
$\| \gamma_{k-1}(x)-\gamma_{k-1}(y)\|\gg X^{-k+1}$ and provides the spacing condition that we were seeking to prove.

Applying Lemma 5.3 of Vaughan \cite{Vau} to (\ref{ec2.8}) we obtain the upper bound
\begin{equation}\label{minstr}\sum_{m\in\mathcal{S}_{2}}\lvert f_{m}(\alpha)\rvert^{2s}\ll X^{k(k-1)/2}\sum_{\bfn\in\mathcal{N}}|a(\bfn)|^{2}.\end{equation} Note first that by bounding the coefficients $a(\bfn)$ trivially one gets
$$\sum_{m\in\mathcal{S}_{2}}\lvert f_{m}(\alpha)\rvert^{2s}\ll X^{k(k-1)/2}J_{s,t_{1}}^{(k-1)}(P_{1}),$$
where $J_{s,t_{1}}^{(k-1)}(P_{1})$ was defined in (\ref{ec2.9}). Then combining the above equation with an application of Cauchy's inequality we obtain
\begin{equation}\label{Falphat}\mathcal{F}(\alpha)^{2s}\ll |\mathcal{S}_{2}|^{2s-1}\sum_{m\in\mathcal{S}_{2}}\lvert f_{m}(\alpha)\rvert^{2s} \ll |\mathcal{S}_{2}|^{2s-1}X^{k(k-1)/2}J_{s,t_{1}}^{(k-1)}(P_{1}),\end{equation} and hence for $s\geq k(k-1)/2$ then Proposition \ref{prop1} delivers
$$\mathcal{F}(\alpha)\ll |\mathcal{S}_{1}||\mathcal{S}_{2}|\big(X^{\delta_{t_{1}}k(k-1)/2}|\mathcal{S}_{2}|^{-1}\big)^{1/2s}X^{\varepsilon}.$$ Therefore, fixing $s=k(k-1)/2$ and recalling (\ref{1.1}) one gets
\begin{align*}\mathcal{F}(\alpha)&
\ll|\mathcal{S}_{1}||\mathcal{S}_{2}|X^{-\varphi_{k-1}/k(k-1)+\varepsilon},
\end{align*} where $\varphi_{k-1}$ was defined in (\ref{varphi}). 

For the second claim of the proposition we combine (\ref{minstr}) and Cauchy's inequality in the same way as in (\ref{Falphat}) and we integrate over $\grm$ to get
$$\int_{\grm}\lvert\mathcal{F}(\alpha)\rvert^{2s}d\alpha\ll |\mathcal{S}_{2}|^{2s-1}X^{k(k-1)/2}J_{s,t_{1}}^{(k)}(P_{1}).$$An application of Proposition \ref{prop1} and the estimate (\ref{1.1}) to the above line then yields, for $s\geq k(k+1)/2$, the bound
 $$\int_{\grm}\lvert\mathcal{F}(\alpha)\rvert^{2s}d\alpha\ll |\mathcal{S}_{1}|^{2s}||\mathcal{S}_{2}|^{2s}X^{-k-\varphi_{k}+\varepsilon},$$from where the second part of the proposition follows.
\end{proof}
Observe that the argument just makes use of the fact that $\mathcal{T}_{t}=\mathcal{T}_{t_{1}}+\mathcal{T}_{l}$, where 
$\lvert\mathcal{T}_{t_{1}}\cap [1,N]\rvert$ and $\lvert\mathcal{T}_{l}\cap [1,N]\rvert$ are appropriately large. Therefore, it could also be applied to other problems for sets with a similar property that don't necessarily have a polynomial structure. We conclude the investigation of the minor arcs by applying Weyl differencing to derive a bound for the exponential sum \begin{equation}\label{fte}f(\alpha)=\sum_{T_{t}(\mathbf{z})\leq P^{l}}e(\alpha T_{t}(\mathbf{z})^{k}),\end{equation} where in the above sum $z\in\mathbb{N}^{t}$. For ease of notation, we avoid writing the dependance on $t$. Note that then one can rewrite $f(\alpha)$ as 
\begin{equation}\label{f(alpha)}
f(\alpha)=\sum_{\substack{\mathbf{x}\in\mathbb{N}^{t-1}\\ P_{\mathbf{x}}\geq 1}}f_{\mathbf{x}}(\alpha),\ \ \ \ \ \ \ \text{where}\ f_{\mathbf{x}}(\alpha)=\sum_{1\leq x\leq P_{\mathbf{x}}}e\big(\alpha T(\mathbf{x},x)^{k}\big)\end{equation} and where we took the parameter $P_{\mathbf{x}}=\big(P^{l}-T_{t-1}(\mathbf{x})\big)^{1/l}.$
\begin{lem}\label{lem2} Let $\alpha\in [0,1)$ and suppose that there exist $a\in\mathbb{Z}$ and $q\in\mathbb{N}$ such that $(a,q)=1$ and $\lvert \alpha-a/q\rvert\leq q^{-2}.$ Then
\begin{equation*} f(\alpha) \ll P^{t+\varepsilon}(q^{-1}+P^{-1}+qP^{-kl})^{2^{1-kl}}.
\end{equation*}
\end{lem}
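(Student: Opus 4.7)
The plan is a standard application of Weyl's inequality to the inner sum $f_{\mathbf{x}}(\alpha)$ in the decomposition (\ref{f(alpha)}), followed by a trivial summation over the $t-1$ outer variables. Fix $\mathbf{x}\in\mathbb{N}^{t-1}$ with $P_{\mathbf{x}}\geq 1$, and write $u=T_{t-1}(\mathbf{x})$. Expanding the binomial $(x^{l}+u)^{k}$, the phase in $f_{\mathbf{x}}(\alpha)$ equals $\alpha g(x)$, where $g\in\mathbb{Z}[x]$ has degree $kl$ and leading coefficient $1$; consequently the leading coefficient of the polynomial $\alpha g(x)$ is $\alpha$ itself. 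The Diophantine hypothesis on $\alpha$ therefore feeds directly into the classical Weyl inequality for polynomial exponential sums (see, for instance, Lemma~2.4 of Vaughan \cite{Vau}), which yields
$$|f_{\mathbf{x}}(\alpha)|\ll P_{\mathbf{x}}^{1+\varepsilon}\bigl(q^{-1}+P_{\mathbf{x}}^{-1}+qP_{\mathbf{x}}^{-kl}\bigr)^{2^{1-kl}}.$$

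Next I would replace $P_{\mathbf{x}}$ by the global parameter $P$ in this estimate. Setting $\gamma=2^{1-kl}\in(0,1]$ and invoking the subadditivity $(A+B+C)^{\gamma}\leq A^{\gamma}+B^{\gamma}+C^{\gamma}$, the right hand side is dominated by the three terms
$$P_{\mathbf{x}}^{1+\varepsilon}q^{-\gamma}+P_{\mathbf{x}}^{1+\varepsilon-\gamma}+P_{\mathbf{x}}^{1+\varepsilon-kl\gamma}q^{\gamma}.$$
The exponents of $P_{\mathbf{x}}$ appearing here are all non-negative, since $kl\cdot 2^{1-kl}\leq 1$ for every integer $kl\geq 1$. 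Hence the bound $P_{\mathbf{x}}\leq P$ allows one to pass to the analogous three-term expression with $P$ in place of $P_{\mathbf{x}}$, which recombines to
$$|f_{\mathbf{x}}(\alpha)|\ll P^{1+\varepsilon}\bigl(q^{-1}+P^{-1}+qP^{-kl}\bigr)^{2^{1-kl}}.$$
Summing over the tuples $\mathbf{x}\in\mathbb{N}^{t-1}$ with $P_{\mathbf{x}}\geq 1$, each coordinate being then constrained by $x_{i}\leq P$ and so contributing at most $P^{t-1}$ choices, delivers the desired estimate.

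No serious obstacle is anticipated: the argument is entirely standard Weyl differencing applied to the polynomial $(x^{l}+u)^{k}$. The only mild subtlety is the passage from a Weyl bound calibrated to the length $P_{\mathbf{x}}$ of the inner sum to one calibrated to the global parameter $P$, handled via the elementary inequality $kl\cdot 2^{1-kl}\leq 1$, which ensures that each term in the decomposed Weyl bound is monotone non-decreasing in the length of summation.
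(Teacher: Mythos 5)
Your argument is correct and is essentially the paper's own proof: decompose $f(\alpha)$ via (\ref{f(alpha)}), apply Weyl's inequality (Vaughan, Lemma 2.4) to the monic degree-$kl$ polynomial in the last variable with implicit constant uniform in the lower coefficients, pass from $P_{\mathbf{x}}$ to $P$, and sum trivially over the $O(P^{t-1})$ outer tuples. Your explicit verification that each term in the split Weyl bound is monotone in the summation length (via $kl\cdot 2^{1-kl}\leq 1$) just spells out the step the paper compresses into ``combining the bound $P_{\mathbf{x}}\leq P$''.
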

\begin{proof}
Observe that the polynomial $T(\bfx,x)$ is monic and of degree $kl$ on $x$. Note that the implicit constant in Weyl's inequality (Vaughan \cite[Lemma 2.4]{Vau}) does not depend on the coefficients which are not the leading one. Therefore, an application of such inequality to $f_{\bfx}(\alpha)$ delivers
$$f_{\bfx}(\alpha)\ll P_{\mathbf{x}}^{1+\varepsilon}\big(q^{-1}+P_{\mathbf{x}}^{-1}+qP_{\mathbf{x}}^{-kl}\big)^{2^{1-kl}},$$ which yields the above estimate by combining the bound $P_{\mathbf{x}}\leq P$ and (\ref{f(alpha)}).
\end{proof}

\section{Singular series}\label{sec3}
Throughout this section we will always assume that $t\geq 2l.$ Define for $a\in\mathbb{Z}$ and $q\in\mathbb{N}$ with $(a,q)=1$ the complete exponential sums
\begin{equation}\label{S(q,a)}S(q,a)=\sum_{1\leq\mathbf{r}\leq q}e_{q}\big(aT(\mathbf{r})^{k}\big)\ \ \ \ \ \text{and}\ \ \ \ \ \ S_{k}(q,a)=\sum_{r=1}^{q}e_{q}(ar^{k}).\end{equation}Note that by orthogonality we can express $S(q,a)$ as
$$S(q,a)=q^{-1}\sum_{u=1}^{q}S_{l}(q,u)^{t}S_{k}(q,a,-u),\ \ \ \ \ \ \ \ \text{where}\ \ S_{k}(q,a,-u)=\sum_{r=1}^{q}e_{q}(ar^{k}-ur).$$ Because of the quasi-multiplicative structure of the exponential sum, we will focus on the case $q=p^{h}$, where $p$ is a prime number. Then, with the above notation one has that
\begin{equation}\label{sing}S(p^{h},a)=p^{(t-1)h}S_{k}(p^{h},a)+E(p^{h},a),\end{equation}
where
$$E(p^{h},a)=p^{-h}\sum_{u=1}^{ p^{h}-1}S_{l}(p^{h},u)^{t}S_{k}(p^{h},a,-u).$$
We estimate $E(p^{h},a)$ by using classical estimates for the sums $S_{l}(p^{h},u)$ and $S_{k}(p^{h},a,-u)$. Applying then Theorems 4.2 and 7.1 of Vaughan \cite{Vau} we obtain
\begin{align}\label{ec3.2}E(p^{h},a)\ll p^{-h/k+\varepsilon}\sum_{u=1}^{p^{h}-1}p^{th(1-1/l)}(u,p^{h})^{t/l}&
\ll p^{h(t-1/k-t/l+1)+\varepsilon}\sum_{d=0}^{h-1}p^{d(t/l-1)}\nonumber
\\
&\ll p^{ht-h/k-(t/l-1)+\varepsilon}.
\end{align}
In order to provide more explicit bounds for the exponential sum $S(q,a)$ it is convenient to introduce first the multiplicative function $w_{k}(q)$ defined as 
$$w_{k}(p^{uk+v})=p^{-u-1}\ \ \ \ \text{when $u\geq 0$ and $2\leq v\leq k$},$$
\begin{equation*}w_{k}(p^{uk+v})=kp^{-u-1/2}\ \ \ \ \text{when $u\geq 0$ and $v=1$}.\end{equation*} 
Then, by Lemma 3 of Vaughan \cite{Vau1} we obtain \begin{equation}\label{putill}q^{-1}\lvert S_{k}(q,a)\rvert\ll w_{k}(q),\end{equation} 
whence combining (\ref{sing}) and (\ref{ec3.2}) with the quasi-multiplicative property of $S(q,a)$ we get \begin{equation}\label{piu}q^{-t}\lvert S(q,a)\rvert \ll w_{k}(q).\end{equation}
For future purposes in the memoir, we note that by applying the definition of $w_{k}(q)$ and multiplicativity then we obtain for $Q>0$ and $s\geq \max(4,k+1)$ the bounds
\begin{equation}\label{pri}\sum_{q\leq Q}w_{k}(q)^{2}\leq \prod_{p\leq Q}\big(1+C/p\big)\ll Q^{\varepsilon},\ \ \ \ \ \ \ \ \ \sum_{q\leq Q}qw_{k}(q)^{s}\leq \prod_{p\leq Q}\big(1+C/p\big)\ll Q^{\varepsilon}.
\end{equation}

Before defining the singular series, we need to consider first the exponential sum
\begin{equation}\label{WW}W(q,a)=\sum_{\substack{r=1\\ (r,q)=1}}^{q}e_{q}(a r^{k}),\end{equation} where $a\in\mathbb{Z}$ and $q\in\mathbb{N}$ with $(a,q)=1.$ Here the reader might want to observe that one can express the sum $W(p^{h},a)$ in terms of $S_{k}(p^{h},a)$ and $S_{k}(p^{h-k},a)$, and hence one can deduce the estimate 
\begin{equation}\label{varp}\varphi(q)^{-1}\lvert W(q,a)\rvert\ll w_{k}(q)\end{equation} by just applying multiplicativity and the bound (\ref{putill}). In order to make further progress, it is worth defining
\begin{equation*}S_{n}(q)=\sum_{\substack{a=1\\ (a,q)=1}}^{q}\big(q^{-t}S(q,a)\big)^{s}\big(q^{-1}S_{k}(q,a)\big)^{2}\big(\varphi(q)^{-1}W(q,a)\big)^{2}e_{q}(-an).\end{equation*} 
We also consider for convenience the series
\begin{equation}\label{sigma}\mathfrak{S}(n)=\sum_{q=0}^{\infty}S_{n}(q),\ \ \ \ \  \ \ \ \ \ \ \sigma(p)=\displaystyle\sum_{h=0}^{\infty} S_{n}(p^{h}),\end{equation} where $p$ is a prime number. We can provide a more arithmetic description of $\sigma(p)$ by defining the set 
$$\mathcal{M}_{n}(p^{h})=\Big\{(\mathbf{y},\mathbf{X})\in [1,p^{h}]^{4}\times [1,p^{h}]^{st}: \ p\nmid y_{1}y_{2},\ n\equiv \sum_{i=1}^{4}y_{i}^{k}+\sum_{i=1}^{s}T(\bfx_{i})^{k}\pmod{p^{h}}\Big\}$$
and considering the counting function $M_{n}(p^{h})=\lvert\mathcal{M}_{n}(p^{h})\rvert$. For each prime $p$ take $\tau\geq 0$ such that $p^{\tau}\| k$ and 
\begin{equation}\label{gama}\gamma=\begin{cases}\tau +1,&\text{when $p>2$ or when $p=2$ and $\tau=0$},\\ \tau+2,& \text{when $p=2$ and $\tau>0$}.\end{cases}\end{equation}

\begin{lem}\label{lem4.3}
Suppose that $s+3\geq \frac{p}{p-1}\big(k,p^{\tau}(p-1)\big)$ when $\gamma=\tau+1$, that $s+3\geq 2^{\tau+2}$ when $\gamma=\tau+2$ and $k>2$, and that $s\geq 2$ when $p=k=2.$ Suppose as well that $t\geq 4l.$ Then one has $M_{n}(p^{\gamma})>0.$
\end{lem}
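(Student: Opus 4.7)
The plan is to reduce the positivity of $M_n(p^\gamma)$ to two classical local solubility statements: first a Waring problem for $l$-th powers (to handle the inner form $T$), and then a Waring problem for $k$-th powers modulo $p^\gamma$ with a designated unit summand. The numerical hypotheses on $s+3$ are chosen to match the thresholds of the latter exactly.

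First I would show that, since $t \geq 4l$, the map $\mathbf{x} \mapsto T(\mathbf{x}) \pmod{p^\gamma}$ is surjective onto $\Z/p^\gamma\Z$ with each residue class attained by a positive proportion of tuples. This is a classical local Waring statement for $l$-th powers modulo a prime power (see the discussion in Chapter 2 of \cite{Vau}), and $4l$ comfortably exceeds the local threshold for $l$-th powers modulo any prime power. Consequently, as the $\mathbf{x}_i$ vary, $T(\mathbf{x}_i)^k$ ranges over all $k$-th power residues modulo $p^\gamma$, and $M_n(p^\gamma)$ is bounded below by a positive constant multiple of the number of solutions to
\begin{equation*}
n \equiv y_1^k + y_2^k + y_3^k + y_4^k + z_1^k + \cdots + z_s^k \pmod{p^\gamma},
\end{equation*}
with $p \nmid y_1 y_2$ and $z_1, \ldots, z_s \in \Z/p^\gamma \Z$ free. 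It therefore suffices to exhibit one such representation for every residue class of $n$.

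For this reduced problem I would invoke the standard local Waring machinery of \cite[Chapter 2]{Vau}: in the tame case $\gamma = \tau + 1$ the sharp threshold on the number of summands (with one designated unit) needed to represent every residue class modulo $p^\gamma$ is $\tfrac{p}{p-1}(k, p^\tau(p-1))$, and in the wild $2$-adic case $\gamma = \tau + 2$ with $k > 2$ it is $2^{\tau+2}$. Taking $y_1$ as the required unit summand and viewing $y_2, y_3, y_4, z_1, \ldots, z_s$ as the $s+3$ free summands then recovers precisely the hypothesised inequalities on $s+3$. The additional coprimality constraint on $y_2$ is enforced by a small adjustment of $y_2$ into a unit residue class, the resulting shift being absorbed by the free $z_i$'s. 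The case $p = k = 2$ is handled by a direct verification modulo $8$ together with the assumption $s \geq 2$.

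The main obstacle I anticipate is the wild $2$-adic case $p = 2$, $\tau \geq 1$, $k > 2$: here one must work modulo $2^{\tau+2}$, the relevant Hensel-type lifting arguments underlying the threshold $s+3 \geq 2^{\tau+2}$ are more delicate owing to the extra $2$-adic valuation in $k y^{k-1}$, and the inductive step must be adapted so as to preserve the distinguished unit summands. Once this adaptation is set in place, however, these arguments follow the well-trodden path of \cite[Chapter 2]{Vau} with no fundamental new obstacle.
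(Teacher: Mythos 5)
Your overall strategy is exactly the paper's: use $t\geq 4l$ to make $T(\mathbf{x})$ surjective modulo $p^{\gamma}$, and then reduce to the classical congruence $n\equiv y_{1}^{k}+\cdots\pmod{p^{\gamma}}$ in $s+3$ summands with one designated unit summand. Indeed, both steps are literally Lemma 2.15 of Vaughan \cite{Vau}, which already comes with the distinguished unit variable built in (including the wild case $p=2$, $\tau>0$, with threshold $2^{\tau+2}$, and the case $p=k=2$ with threshold $5$, i.e.\ $s\geq 2$), so the adaptation of the Hensel-type lifting that you anticipate as the main obstacle is not actually needed.

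The one step in your write-up that does not work as stated is the treatment of the second coprimality condition $p\nmid y_{2}$: ``adjusting $y_{2}$ into a unit residue class and absorbing the shift into the free $z_{i}$'s'' is not justified, because absorbing a shift $\delta$ means re-solving $z_{1}^{k}+\cdots+z_{s}^{k}\equiv m-\delta\pmod{p^{\gamma}}$, a Waring congruence in only $s$ summands (possibly as few as $s=2$) with no designated unit, and there is no reason this should be soluble. The standard fix, which costs nothing against your thresholds, is to fix $y_{2}=1$ (or any unit) from the outset and apply the designated-unit lemma to $n-y_{2}^{k}$ with the remaining $s+3$ summands $y_{1},y_{3},y_{4},z_{1},\dots,z_{s}$, taking $y_{1}$ as the unit; the $z_{i}$ are then realised as $T(\mathbf{x}_{i})$ via the first step. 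With that repair your argument coincides with the paper's proof.
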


\begin{proof}
It is worth noting first that Lemma 2.15 of Vaughan \cite{Vau} implies that $$T(\mathbf{x})=x_{1}^{l}+\dots+x_{t}^{l}\equiv m\pmod{p^{\gamma}}$$ is soluble for all $m\in\mathbb{N}.$ 
The result follows then using the previous remark and observing that under the conditions described above, the same lemma delivers a representation
$$\sum_{i=1}^{3}y_{i}^{k}+\sum_{i=1}^{s}z_{i}^{k}\equiv n\pmod{p^{\gamma}}$$with $p\nmid y_{1}.$
\end{proof}
\begin{prop}\label{prop3}
Let $s\geq \max(1,k-2).$ Then one has that \begin{equation}\label{produ}\mathfrak{S}(n)=\prod_{p}\sigma(p),\end{equation} the singular series $\mathfrak{S}(n)$ converges and $\mathfrak{S}(n)\ll 1.$ Also, for $Q>0$ we obtain the estimate
\begin{equation}\label{qkes}\sum_{q=1}^{Q}q^{1/k}\lvert S_{n}(q)\rvert\ll Q^{\varepsilon}.\end{equation}
Moreover, if $s$ satisfies the conditions of Lemma \ref{lem4.3} then $\mathfrak{S}(n)\gg 1$.
\end{prop}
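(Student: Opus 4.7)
My plan is to prove the three assertions in order: multiplicativity and the Euler product (\ref{produ}), the sum bound (\ref{qkes}) together with convergence of $\mathfrak{S}(n)$, and finally the positivity $\mathfrak{S}(n)\gg 1$.

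For the first assertion, I would verify via the Chinese Remainder Theorem that each of $S(q,a)$, $S_{k}(q,a)$ and $W(q,a)$ is quasi-multiplicative in $q$. Writing $q=q_{1}q_{2}$ with $(q_{1},q_{2})=1$ and decomposing both $a$ and the summation variables through CRT, each of these exponential sums factors as a product of two sums of the same shape on the moduli $q_{1}$ and $q_{2}$; the normalising denominators $q^{-t}$, $q^{-1}$ and $\varphi(q)^{-1}$ split identically, and the phase $e_{q}(-an)$ factors correspondingly. Hence $q\mapsto S_{n}(q)$ is multiplicative, and the product formula (\ref{produ}) will follow once absolute convergence of the series in (\ref{sigma}) has been verified.

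For the convergence and the bound (\ref{qkes}), combining the pointwise estimates (\ref{putill}), (\ref{piu}) and (\ref{varp}) gives
\[ |S_{n}(q)|\ll \varphi(q)w_{k}(q)^{s+4}, \]
so that
\[ \sum_{q\leq Q}q^{1/k}|S_{n}(q)|\ll \sum_{q\leq Q}q^{1+1/k}w_{k}(q)^{s+4}. \]
The right hand summand is multiplicative, so I would bound it by the Euler product $\prod_{p\leq Q}\big(1+\sum_{h\geq 1}p^{h(1+1/k)}w_{k}(p^{h})^{s+4}\big)$ and evaluate each local factor directly from the definition of $w_{k}$. The critical term $h=1$ is of size $\ll k^{s+4}p^{1+1/k-(s+4)/2}$, which is $O(p^{-1})$ once $s\geq 1$; the remaining contributions $2\leq h\leq k$ are $O(p^{-1})$ provided $s\geq k-2$; and the tail $h>k$ converges geometrically under the same condition. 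Each local factor is therefore $1+O(1/p)$, and the product is $O(Q^{\varepsilon})$ by Mertens-type manipulations. This yields (\ref{qkes}), and in particular shows that $\mathfrak{S}(n)$ converges absolutely with $\mathfrak{S}(n)\ll 1$, legitimising (\ref{produ}).

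For the positivity, I would apply Fourier inversion on $\mathbb{Z}/p^{h}\mathbb{Z}$ together with the scaling relations for $S(q,a)$, $S_{k}(q,a)$ and $W(q,a)$ to obtain the identity
\[ M_{n}(p^{h})=(1-p^{-1})^{2}p^{h(st+3)}\sum_{d=0}^{h}S_{n}(p^{d}), \]
whence $\sigma(p)=(1-p^{-1})^{-2}\lim_{h\to\infty}p^{-h(st+3)}M_{n}(p^{h})$, and it suffices to show $M_{n}(p^{h})\gg p^{h(st+3)}$ uniformly in $h$ and $n$. Lemma \ref{lem4.3} provides a solution mod $p^{\gamma}$ with $p\nmid y_{1}y_{2}$, and a Hensel-type lifting (of the kind familiar from the sums-of-$k$-th-powers literature, see Vaughan \cite{Vau}) propagates this to every $p^{h}$ with $h\geq\gamma$, each lifted solution giving rise to $p^{(h-\gamma)(st+3)}$ solutions modulo $p^{h}$. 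This produces the desired lower bound on $M_{n}(p^{h})$ and hence $\sigma(p)\gg 1$ for every prime $p$ (with constant depending on $p$). For primes $p$ larger than some fixed $p_{0}=p_{0}(k)$, the pointwise estimates of the previous paragraph give $\sigma(p)=1+O(p^{-1-\delta})$, so $\prod_{p>p_{0}}\sigma(p)\gg 1$; combined with the finite positive product over $p\leq p_{0}$ this delivers $\mathfrak{S}(n)\gg 1$. The step I expect to be the main obstacle is precisely this Hensel-type lifting at small primes, and particularly at $p=2$, where the valuation $\tau\geq 1$ of $k$ forces the more stringent choice $\gamma=\tau+2$ in (\ref{gama}); the design of Lemma \ref{lem4.3} to supply two variables coprime to $p$ is what permits the implicit function theorem $p$-adically to be applied to the required precision.
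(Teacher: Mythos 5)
Your proposal is correct and follows essentially the same route as the paper's proof: the pointwise bounds \eqref{putill}, \eqref{piu} and \eqref{varp} give $\lvert S_{n}(p^{h})\rvert\ll p^{h}w_{k}(p^{h})^{s+4}$, after which multiplicativity and Euler products deliver \eqref{produ}, the convergence, $\mathfrak{S}(n)\ll 1$ and \eqref{qkes}, while the lower bound rests on the same orthogonality identity relating $\sum_{j\leq h}S_{n}(p^{j})$ to $M_{n}(p^{h})$ together with Lemma \ref{lem4.3} and the lifting of $k$-th power residues from modulus $p^{\gamma}$ to $p^{h}$, combined with $\sigma(p)=1+O(p^{-3/2})$ for large $p$. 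The additional detail you provide (the CRT verification of multiplicativity and the explicit local-factor computations at $h=1$, $2\leq h\leq k$ and $h>k$) merely fleshes out steps the paper treats as routine.
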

\begin{proof}
The application of (\ref{putill}), (\ref{piu}) and (\ref{varp}) delivers $S_{n}(p^{h})\ll p^{h}w_{k}(p^{h})^{s+4},$ whence combining such bound with the definition of $w_{k}(q)$ we obtain the estimates
\begin{equation}\label{piy}\displaystyle\sum_{h=1}^{\infty}\lvert S_{n}(p^{h})\rvert\ll p^{-3/2},\ \ \ \ \ \  \ \ \ \ \ \ \ \ \ \sum_{h=1}^{\infty}p^{h/k}\lvert S_{n}(p^{h})\rvert\ll p^{-1}.\end{equation}Therefore, by the multiplicative property of $S_{n}(q)$ we get (\ref{produ}) and $$\sum_{q=1}^{Q}\lvert S_{n}(q)\rvert\ll \prod_{p\leq Q}(1+Cp^{-3/2})\ll 1,$$
which delivers the upper bound for $\mathfrak{S}(n)$. The estimate (\ref{qkes}) follows in a similar way.

Observe that expressing $S_{n}(p^{h})$ as the difference of two complete exponential sums and using orthogonality we get
\begin{equation}\label{Snm}\sum_{j=0}^{h}S_{n}(p^{j})=M_{n}(p^{h})p^{-h(st+1)}\varphi(p^{h})^{-2}.\end{equation} We use Lemma \ref{lem4.3} and the fact that if $m$ with $(m,p)=1$ is a $k$-th power modulo $p^{\gamma}$ then it is also a $k$-th power modulo $p^{h}$ for $h\geq \gamma$ to obtain the lower bound $M_{n}(p^{h})\geq p^{(st+3)(h-\gamma)}$. Combining such lower bound with the above expression and (\ref{sigma}) we find that $\sigma(p)\geq p^{-\gamma(st+3)}.$ Therefore, by the preceding discussion and equation (\ref{piy}) we get $\frak{S}(n)\gg 1$.
\end{proof}

We next define an analogous singular series that arises in the analysis of the major arc contribution in Theorem \ref{thm1.3}. This series will be in nature quite similar to $\frak{S}(n)$, so we will skip some details for the sake of brevity. For such purposes, consider
\begin{equation}\label{Sn'}S'_{n}(q)=\sum_{\substack{a=1\\ (a,q)=1}}^{q}\big(q^{-t}S(q,a)\big)^{s}e_{q}(-an).\end{equation} 
Observe that using (\ref{piu}) then we have
\begin{equation}\label{appst}S'_{n}(p^{h})\ll p^{h}w_{k}(p^{h})^{s}.\end{equation}
Moreover, for any prime $p$ and $h\in\mathbb{N}$ and combining (\ref{sing}), (\ref{ec3.2}), (\ref{putill}) and (\ref{piu}) we can rewrite $S'_{n}(p^{h})$ as
\begin{equation}\label{apps}S_{n}'(p^{h})=\sum_{\substack{a=1\\ (a,p)=1}}^{p^{h}}\big(p^{-h}S_{k}(p^{h},a)\big)^{s}e_{p^{h}}(-an)+E(p^{h}),\end{equation}
where the first term is the analogous sum for the original Waring's problem and the error term satisfies $$E(p^{h})\ll p^{h-h/k-(t/l-1)+\varepsilon} w_{k}(p^{h})^{s-1}.$$ 

We define next the aforementioned series
\begin{equation*}\mathfrak{S}'(n)=\sum_{q=0}^{\infty}S'_{n}(q),\ \ \ \ \  \ \ \ \ \ \ \sigma'(p)=\displaystyle\sum_{h=0}^{\infty} S'_{n}(p^{h}),\end{equation*} where $p$ is a prime number. We can provide a more arithmetic description of $\sigma'(p)$ by considering the set 
$$\mathcal{M}^{*}_{n}(p^{h})=\Big\{\mathbf{X}\in [1,p^{h}]^{st}: \ p\nmid x_{1,1},\ p\nmid T(\mathbf{x}_{1}),\ \ n\equiv \sum_{i=1}^{s}T(\bfx_{i})^{k}\pmod{p^{h}}\Big\},$$ where $\mathbf{x}_{1}=(x_{1,1},\ldots, x_{1,t})$, and taking the counting function $M_{n}^{*}(p^{h})=\lvert\mathcal{M}_{n}^{*}(p^{h})\rvert$. For each prime $p$ take $\tau_{1}\geq 0$ such that $p^{\tau_{1}}\| kl$ and $\nu=\nu(p)=2\tau_{1}+1.$ Before stating the following lemma, recall (\ref{gama}).

\begin{lem}\label{lem4.45}
Suppose that $s\geq \frac{p}{p-1}\big(k,p^{\tau}(p-1)\big)$ when $\gamma=\tau+1$, that $s\geq 2^{\tau+2}$ when $\gamma=\tau+2$ and $k>2$, and that $s\geq 5$ when $p=k=2.$ Suppose as well that $t\geq 4l.$ Then one has $M_{n}^{*}(p^{\nu})>0.$
\end{lem}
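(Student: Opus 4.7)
The plan is to follow the same strategy used for Lemma \ref{lem4.3}: first produce a representation of $n$ as a sum of $s$ $k$-th powers modulo $p^{\nu}$ with one base coprime to $p$, and then realise each base as a value $T(\mathbf{x}_i)$, taking care to enforce the extra coprimality $p\nmid x_{1,1}$ for the first block. The exponent $\nu=2\tau_{1}+1$ is chosen precisely as the Hensel lifting threshold for the composite polynomial $T(\mathbf{x})^{k}$, since when $p\nmid x_{1,1}$ and $p\nmid T(\mathbf{x}_{1})$ the derivative $\partial_{x_{1,1}}T(\mathbf{x}_{1})^{k}=kl\,T(\mathbf{x}_{1})^{k-1}x_{1,1}^{l-1}$ has $p$-adic valuation exactly $\tau_{1}=v_{p}(kl)$.

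First, I would invoke Lemma 2.15 of Vaughan \cite{Vau} for pure $k$-th powers. Under the hypotheses on $s$, which are precisely those of that lemma for the exponent $k$, the congruence
$$\sum_{i=1}^{s}z_{i}^{k}\equiv n\pmod{p^{\gamma}}$$
is soluble with $p\nmid z_{1}$. Using $p\nmid z_{1}$ and the fact that $k z_{1}^{k-1}$ has $p$-adic valuation exactly $\tau$, Hensel's lemma allows me to adjust $z_{1}$ so as to upgrade this to a solution modulo $p^{\nu}$ with $p\nmid z_{1}$ still in force.

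Next, for each $i$, I would solve $T(\mathbf{x}_{i})\equiv z_{i}\pmod{p^{\nu}}$, which immediately yields $T(\mathbf{x}_{i})^{k}\equiv z_{i}^{k}\pmod{p^{\nu}}$. For $i\geq 2$ this is a standard instance of the $l$-th power Waring congruence, which is soluble for any residue class by Lemma 2.15 of Vaughan \cite{Vau} applied to $l$-th powers, given that $t\geq 4l$ vastly exceeds the threshold required, with lifting from level $\gamma_{l}$ (the $l$-th power analogue of $\gamma$) to level $\nu$ achieved through Hensel on a coordinate coprime to $p$. For $i=1$, I would first fix $x_{1,1}$ to be a unit modulo $p$ chosen so that $z_{1}-x_{1,1}^{l}$ is a sum of $l$-th powers modulo $p^{\nu}$, again using $p\nmid z_{1}$, and then solve the $(t-1)$-variable $l$-th power congruence $x_{1,2}^{l}+\cdots+x_{1,t}^{l}\equiv z_{1}-x_{1,1}^{l}\pmod{p^{\nu}}$. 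Combining the solutions gives
$$\sum_{i=1}^{s}T(\mathbf{x}_{i})^{k}\equiv \sum_{i=1}^{s}z_{i}^{k}\equiv n\pmod{p^{\nu}},$$
together with $p\nmid x_{1,1}$ and $p\nmid T(\mathbf{x}_{1})\equiv z_{1}\pmod{p}$, which is exactly the content of $M_{n}^{*}(p^{\nu})>0$.

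The main delicate point is the choice of $\nu=2\tau_{1}+1$ and the verification that the single Hensel lifting step on the variable $x_{1,1}$ absorbs both the $l$-th power and the $k$-th power structure simultaneously. Concretely, one must check that the loss incurred in lifting $l$-th power congruences from $p^{\gamma_{l}}$ up to $p^{\nu}$ is compatible with the subsequent fact that $z_{i}\mapsto z_{i}^{k}$ preserves congruences mod $p^{\nu}$, and that the specific conditions on $s$ inherited from Vaughan's lemma for $k$-th powers are not weakened by the intervening $l$-th power layer. Because the hypothesis $t\geq 4l$ is generous, the $l$-th power congruences impose no extra constraint on $s$, so the bound on $s$ in the statement matches the Vaughan bound for pure $k$-th powers as in Lemma \ref{lem4.3}, merely with the slightly relaxed constants reflecting that here one need only secure $p\nmid x_{1,1}$ and $p\nmid T(\mathbf{x}_{1})$ rather than the two separate coprimality conditions imposed on $y_{1},y_{2}$ there.
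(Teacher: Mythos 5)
Your proposal is correct and takes essentially the same route as the paper: apply Vaughan's Lemma 2.15 (together with the standard lifting of unit $k$-th powers from modulus $p^{\gamma}$ to $p^{\nu}$, since $\nu\geq\gamma$) to obtain $\sum_{i=1}^{s}y_{i}^{k}\equiv n\pmod{p^{\nu}}$ with $p\nmid y_{1}$, and then apply the same lemma to $l$-th powers, using $t\geq 4l$, to realise each $y_{i}$ as $T(\mathbf{x}_{i})\pmod{p^{\nu}}$ with $p\nmid x_{1,1}$ and $p\nmid T(\mathbf{x}_{1})$. Your additional Hensel-lifting remarks about the specific value $\nu=2\tau_{1}+1$ are harmless but not needed at this stage; in the paper that choice only becomes relevant later, in the lower bound for $\sigma'(p)$ in Proposition \ref{Prop4}.
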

\begin{proof}
It is worth noting first that since $\nu \geq \gamma$ then Lemma 2.15 of Vaughan \cite{Vau} and the fact that if $b\in\mathbb{N}$ with $(b,p)=1$ is a $k$-th power modulo $p^{\gamma}$ then it is also a $k$-th power modulo $p^{\nu}$ imply that $$T(\mathbf{x})=x_{1}^{l}+\dots+x_{t}^{l}\equiv m\pmod{p^{\nu}}$$ with $p\nmid x_{1}$ is soluble for all $m\in\mathbb{N}.$ 
The lemma follows using the previous remarks and observing that under the conditions described above, Lemma 2.15 of Vaughan \cite{Vau} delivers a representation
$$\sum_{i=1}^{s}y_{i}^{k}\equiv n\pmod{p^{\nu}}$$with $p\nmid y_{1}.$
\end{proof}
\begin{prop}\label{Prop4}
Let $s\geq \max(5,k+2).$ Then one has that \begin{equation*}\label{prod}\mathfrak{S}'(n)=\prod_{p}\sigma(p),\end{equation*} the singular series $\mathfrak{S}'(n)$ converges and $\mathfrak{S}'(n)\ll 1.$ Also, for $Q>0$ we obtain the estimate
\begin{equation}\label{qkesqt}\sum_{q=1}^{Q}q^{1/k}\lvert S_{n}'(q)\rvert\ll Q^{\varepsilon}.\end{equation}
Moreover, if $s$ satisfies the conditions of Lemma \ref{lem4.45} then $\mathfrak{S}'(n)\gg 1$.
\end{prop}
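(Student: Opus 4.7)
The plan is to follow the blueprint of the proof of Proposition \ref{prop3}, substituting the simpler expression $S'_{n}(q)$ for $S_{n}(q)$ throughout. The quasi-multiplicative property of $S(q,a)$ inherited from (\ref{piu}), combined with the Chinese Remainder Theorem, renders $q\mapsto S'_{n}(q)$ multiplicative, and yields the Euler product $\mathfrak{S}'(n)=\prod_{p}\sigma'(p)$ once absolute convergence is established.

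For convergence and the upper bounds I apply the pointwise estimate (\ref{appst}), namely $S'_{n}(p^{h})\ll p^{h}w_{k}(p^{h})^{s}$. Unpacking the definition of $w_{k}(p^{h})$, one has $w_{k}(p^{h})\ll p^{-h/k+\varepsilon}$ uniformly, so when $s\geq k+2$ the product $p^{h}w_{k}(p^{h})^{s}$ decays in $h$ fast enough to secure the analogues of (\ref{piy}):
$$\sum_{h=1}^{\infty}\lvert S'_{n}(p^{h})\rvert\ll p^{-3/2}\qquad\text{and}\qquad\sum_{h=1}^{\infty}p^{h/k}\lvert S'_{n}(p^{h})\rvert\ll p^{-1}.$$
Multiplicativity then delivers the Euler product formula together with $\mathfrak{S}'(n)\ll\prod_{p}(1+Cp^{-3/2})\ll 1$, and (\ref{qkesqt}) follows by assembling the truncated Euler product and invoking the elementary bound (\ref{pri}).

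For the lower bound on $\mathfrak{S}'(n)$ the plan is to show $\sigma'(p)>0$ for every prime $p$ and that the infinite product converges to a positive quantity. Performing the substitution $a\mapsto ap^{h-j}$ inside the inner sum of $\sum_{j=0}^{h}S'_{n}(p^{j})$ repackages the double sum as a single unrestricted sum over $a'\in[1,p^{h}]$, and orthogonality then yields the arithmetic identity
$$\sum_{j=0}^{h}S'_{n}(p^{j})=p^{h(1-ts)}N_{n}(p^{h}),$$
where $N_{n}(p^{h})$ counts all $(\mathbf{x}_{1},\ldots,\mathbf{x}_{s})\in[1,p^{h}]^{st}$ satisfying $\sum_{i=1}^{s}T(\mathbf{x}_{i})^{k}\equiv n\pmod{p^{h}}$. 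Since $N_{n}(p^{h})\geq M^{*}_{n}(p^{h})$, the task reduces to proving $M^{*}_{n}(p^{h})\gg_{p}p^{h(st-1)}$ for every $h\geq\nu$, which is the main obstacle. Lemma \ref{lem4.45} furnishes an initial solution modulo $p^{\nu}$ with $p\nmid x_{1,1}$ and $p\nmid T(\mathbf{x}_{1})$; the point of these coprimality conditions is to guarantee $v_{p}\big(kl\,T(\mathbf{x}_{1})^{k-1}x_{1,1}^{l-1}\big)=\tau_{1}$, which is precisely the derivative condition ensuring that Hensel's lemma applies in the single variable $x_{1,1}$, given the choice $\nu=2\tau_{1}+1$. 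Freely lifting the remaining $st-1$ coordinates within their mod-$p^{\nu}$ residue classes then yields at least $p^{(h-\nu)(st-1)}$ solutions modulo $p^{h}$, producing the positivity $\sigma'(p)\gg_{p}1$. The tail estimate $\sigma'(p)=1+O(p^{-3/2})$ valid outside a finite exceptional set finally produces $\mathfrak{S}'(n)\gg 1$.
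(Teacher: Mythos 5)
Your proposal is correct and follows essentially the same route as the paper: the pointwise bound \eqref{appst} together with the multiplicativity of $w_{k}$ gives the local series estimates \eqref{piyi}, hence convergence, the Euler product, $\mathfrak{S}'(n)\ll 1$ and \eqref{qkesqt}, while the lower bound comes from the orthogonality identity $\sum_{j\leq h}S_{n}'(p^{j})=p^{h(1-st)}N_{n}(p^{h})$, Lemma \ref{lem4.45} and a Hensel lift in the variable $x_{1,1}$ (using $p\nmid x_{1,1}T(\mathbf{x}_{1})$ and $\nu=2\tau_{1}+1$) to get $\sigma'(p)\geq p^{-\nu(st-1)}$. You simply spell out the details that the paper leaves implicit, and your treatment of the Hensel step and the counting of lifts is accurate.
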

\begin{proof}Equation (\ref{appst}) and the multiplicativity of $w_{k}(q)$ yield
\begin{equation}\label{piyi}\displaystyle\sum_{h=1}^{\infty}\lvert S_{n}'(p^{h})\rvert\ll p^{-3/2},\ \ \ \ \ \  \ \ \ \ \ \ \ \ \ \sum_{h=1}^{\infty}p^{h/k}\lvert S_{n}'(p^{h})\rvert\ll p^{-1},\end{equation} which imply the convergence, the upper bound for the singular series and (\ref{qkesqt}). For obtaining the lower bound for $\frak{S}'(n)$ we observe that Lemma \ref{lem4.45}, an application of Hensel's Lemma and the argument used to derive (\ref{Snm}) allow one to obtain  $\sigma'(p)\geq p^{-\nu(st-1)}$, whence combining such estimate with (\ref{piyi}) we then get $\frak{S}'(n)\gg 1$.
\end{proof}
Scholars in the area will realise that one could use (\ref{apps}) and the bounds available in the literature for the singular series in the original Waring's problem (see \cite[Theorem 4.5]{Vau}) to obtain $\mathfrak{S}'(n)\gg 1$ for the range $s\geq \max(4,k+1)$. One could also deduce (\ref{qkesqt}) but with an extra factor of $n^{\varepsilon}$ in the right side of the bound for the same range (see \cite[Lemma 4.8]{Vau}).
\section{Approximation of some exponential sum over the major arcs}\label{sec4}
In this section we approximate $f(\alpha)$ on the major arcs by some auxiliary function. For such purpose it is convenient to introduce first some notation. Let $\alpha\in [0,1)$ and $a\in\mathbb{Z}$, $q\in\mathbb{N}$ with $(a,q)=1$. Denote $\beta=\alpha-a/q$ and consider the aforementioned function $U(\alpha,q,a)=c_{t,l}q^{-t}S(q,a)u(\beta)$, where
\begin{equation}\label{Ualp}u(\beta)=k^{-1}\sum_{m=1}^{n}m^{t/kl-1}e(\beta m),\ \ \ \  \ \ \ \ c_{t,l}=\Gamma(1+1/l)^{t}\Gamma(t/l)^{-1},\end{equation} and $S(q,a)$ was defined in (\ref{S(q,a)}). 
\begin{prop}\label{prop200}
Let $q<P$ and $\alpha,a,q,\beta$ as above. Then one has
$$f(\alpha)=U(\alpha,q,a)+O\big(qP^{t-1}(1+n\lvert\beta\rvert)\big).$$
\end{prop}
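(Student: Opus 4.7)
The plan is to carry out the standard residue-class decomposition on the major arcs, following Vaughan's treatment of the classical case. Writing $\alpha=a/q+\beta$ and splitting the range of summation in $f(\alpha)$ according to $\mathbf{z}\equiv\mathbf{r}\pmod{q}$ with $1\le\mathbf{r}\le q$, the congruence $T(\mathbf{z})^k\equiv T(\mathbf{r})^k\pmod{q}$ permits one to factor out $e_q(aT(\mathbf{r})^k)$ from the inner sum. Setting $\mathbf{z}=\mathbf{r}+q\mathbf{w}$, this produces
\begin{equation*}
f(\alpha)=\sum_{1\le\mathbf{r}\le q}e_q(aT(\mathbf{r})^k)\,\Sigma(\mathbf{r},\beta),\qquad \Sigma(\mathbf{r},\beta)=\sum_{\substack{\mathbf{w}\ge\mathbf{0}\\ T(\mathbf{r}+q\mathbf{w})\le P^l}}e\bigl(\beta T(\mathbf{r}+q\mathbf{w})^k\bigr).
\end{equation*}

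Next I would approximate $\Sigma(\mathbf{r},\beta)$ by a volume integral via iterated one-dimensional Euler--Maclaurin summation. A single partial derivative of $\beta T(\mathbf{r}+q\mathbf{w})^k$ with respect to $w_j$ is bounded in absolute value by $|\beta|\cdot klq T^{k-1}(r_j+qw_j)^{l-1}\ll qn|\beta|/P$, while each coordinate interval has length $\ll P/q$ and the transversal lattice grid has cardinality $\ll (P/q)^{t-1}$. Combining this with the standard $O((P/q)^{t-1})$ bound for the number of lattice cells intersecting the curved boundary $T(\mathbf{r}+q\mathbf{w})=P^l$, one obtains after the change of variable $\mathbf{y}=\mathbf{r}+q\mathbf{w}$ the approximation
\begin{equation*}
\Sigma(\mathbf{r},\beta)=q^{-t}\int_{\substack{\mathbf{y}\ge\mathbf{0}\\ T(\mathbf{y})\le P^l}}e(\beta T(\mathbf{y})^k)\,d\mathbf{y}+O\bigl((P/q)^{t-1}(1+n|\beta|)\bigr),
\end{equation*}
where the shift of the lower endpoint from $\mathbf{y}\ge\mathbf{r}$ to $\mathbf{y}\ge\mathbf{0}$ contributes only an $O(qP^{t-1})$ boundary volume already absorbed into the error. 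The hypothesis $q<P$ guarantees that a unit step is available in each coordinate, making this approximation meaningful.

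Finally I would evaluate the integral by slicing with respect to $y=T(\mathbf{y})$. Since $\mathrm{vol}\{\mathbf{y}\ge\mathbf{0}:T(\mathbf{y})\le y\}=\Gamma(1+1/l)^t\Gamma(1+t/l)^{-1}y^{t/l}$, differentiation with respect to the level yields
\begin{equation*}
\int_{\substack{\mathbf{y}\ge\mathbf{0}\\T(\mathbf{y})\le P^l}}e(\beta T(\mathbf{y})^k)\,d\mathbf{y}=\frac{t}{l}\cdot\frac{\Gamma(1+1/l)^t}{\Gamma(1+t/l)}\int_0^{P^l}y^{t/l-1}e(\beta y^k)\,dy.
\end{equation*}
The change of variable $m=y^k$ together with the identity $\Gamma(1+t/l)=(t/l)\Gamma(t/l)$ converts the right-hand side into $c_{t,l}k^{-1}\int_0^n m^{t/(kl)-1}e(\beta m)\,dm$, and one further application of Abel summation identifies this integral with $u(\beta)$ up to an error of size $O(1+n|\beta|)$. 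Summing over $\mathbf{r}$, the exponentials $e_q(aT(\mathbf{r})^k)$ collapse into $S(q,a)$, and amalgamating the errors (the residue class sum contributing the factor $q^t$) gives the stated bound $O(qP^{t-1}(1+n|\beta|))$.

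The main obstacle is controlling the multidimensional Euler--Maclaurin error over the curved region $\{T\le P^l\}$: one must verify that the contributions from the boundary strip, after being weighted by the gradient of the phase function, remain of size $O((P/q)^{t-1}(1+n|\beta|))$. This is handled by slicing against the level sets of $T$, whose $(t-1)$-dimensional surface measure is $\ll P^{t-1}$, ensuring the boundary and variation terms combine cleanly into the desired final error.
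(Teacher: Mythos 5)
Your proposal is correct and follows essentially the same route as the paper: the same residue-class decomposition with $e_q(aT(\mathbf{r})^k)$ factored out, the same sum-to-integral approximation with per-class error $O\big((P/q)^{t-1}(1+n\lvert\beta\rvert)\big)$, the same evaluation of the resulting integral (your Dirichlet volume formula is the paper's Euler--Beta computation producing $c_{t,l}$), and the same final comparison of $k^{-1}\int_0^n m^{t/kl-1}e(\beta m)\,dm$ with $u(\beta)$. The only difference is bookkeeping: you convert the lattice sum to an integral by a direct multidimensional cell-by-cell comparison (phase-derivative bound $\ll qn\lvert\beta\rvert/P$ over $\ll (P/q)^{t}$ interior cells plus $\ll (P/q)^{t-1}$ boundary cells), whereas the paper iterates one-dimensional Abel summation and must track the derivative of the inner integrals, including the motion of the curved upper limit; both accountings give the same error, and your remaining imprecision (the $u$ versus $u_2$ comparison costs $O\big((1+n^{t/kl-1})(1+n\lvert\beta\rvert)\big)$ rather than $O(1+n\lvert\beta\rvert)$ when $t>kl$) is harmlessly absorbed into the stated bound.
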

\begin{proof}
Before embarking on our task, it is convenient to consider for each $\mathbf{r}\in\mathbb{N}^{t}$ and $r\in\mathbb{N}$ the sums
 \begin{equation*}K_{\mathbf{r}}(\beta)=\displaystyle\sum_{\substack{T(\mathbf{x})\leq P^{l}\\ \mathbf{x}\equiv\mathbf{r}\pmod{q}}}e\big(\beta T(\mathbf{x})^{k}\big),\ \ \ \ \ \ \ B_{r}(x)=\displaystyle\sum_{\substack{0<z\leq x\\ z\equiv r\pmod{q}}}1.\end{equation*} Observe that by sorting the summation into arithmetic progressions modulo $q$ we find that
\begin{equation}\label{halpa}
f(\alpha)=\sum_{\mathbf{r}\leq q}e_{q}\big(aT(\mathbf{r})^{k}\big)K_{\mathbf{r}}(\beta).
\end{equation} 
For each $\mathbf{r}\in\mathbb{N}^{t}$ write $\mathbf{r}=(\mathbf{r}_{t-1},r_{t}),$ where $\mathbf{r}_{t-1}\in\mathbb{N}^{t-1}$. Then recalling the definition of $P_{\mathbf{x}}$ after (\ref{f(alpha)}) and using Abel's summation formula we find that
\begin{align*}
K_{\mathbf{r}}(\beta)=&
\sum_{\mathbf{x}}B_{r_{t}}(P_{\mathbf{x}})e\big(\beta T(\mathbf{x},P_{\mathbf{x}})^{k}\big)-\sum_{\mathbf{x}}\int_{0}^{P_{\mathbf{x}}}\frac{\partial }{\partial z}e\big(\beta T(\mathbf{x},z)^{k}\big)B_{r_{t}}(z)dz,
\end{align*}
where $\mathbf{x}\in\mathbb{N}^{t-1}$ runs over tuples satisfying $T_{t-1}(\mathbf{x})\leq P^{l}-1$ and $\mathbf{x}\equiv \mathbf{r}_{t-1}\mmod{q}$. Consequently, combining the formula $B_{r_{t}}(x)=x/q+O(1)$ and an application of integration by parts one gets
\begin{equation*}K_{\mathbf{r}}(\beta)= q^{-1}\sum_{\mathbf{x}}\int_{0}^{P_{\mathbf{x}}}e\big(\beta T(\mathbf{x},z)^{k}\big)dz+O\big(q^{-t+1}P^{t-1}(1+n\lvert\beta\rvert)\big).\end{equation*} We have included a brief discussion of the next step in the argument since it involves a technical detail which was not required before. Note first that Abel's summation formula combined with the above equation yields
\begin{align*}K_{\mathbf{r}}(\beta)= q^{-1}\sum_{\mathbf{x}}B_{r_{t-1}}(P_{(\mathbf{x},0)})I(P_{(\mathbf{x},0)})
&-q^{-1}\sum_{\mathbf{x}}\int_{0}^{P_{(\mathbf{x},0)}}\frac{\partial I}{\partial z_{t-1}}(z_{t-1})B_{r_{t-1}}(z_{t-1})dz_{t-1}
\\
&+O\big(q^{-t+1}P^{t-1}(1+n\lvert\beta\rvert)\big),
\end{align*}
where $\mathbf{x}\in\mathbb{N}^{t-2}$ runs over tuples satisfying $T_{t-2}(\mathbf{x})\leq P^{l}$ with $\mathbf{x}\equiv \mathbf{r}_{t-2}\mmod{q}$ and $$I(z_{t-1})=\int_{0}^{P_{\mathbf{x},z_{t-1}}}e\big(\beta T(\mathbf{x},z_{t-1},z_{t})^{k}\big)dz_{t}.$$ Observe that combining the Fundamental Theorem of Calculus and the exchangeability of derivation and integration we find that $$\frac{\partial}{\partial z_{t-1}}I(z_{t-1})\ll \frac{\partial}{\partial z_{t-1}}P_{\mathbf{x},z_{t-1}}+n\lvert\beta\rvert,$$whence another combination of the formula $B_{r_{t-1}}(x)=x/q+O(1)$ and integration by parts yields
\begin{equation*}K_{\mathbf{r}}(\beta)= q^{-2}\sum_{\mathbf{x}}\int_{\mathcal{C}}e\big(\beta T(\mathbf{x},z_{t-1},z_{t})^{k}\big)dz_{t-1}dz_{t}+O\big(q^{-t+1}P^{t-1}(1+n\lvert\beta\rvert)\big),\end{equation*}
where $\mathcal{C}$ is the set of pairs $(z_{t-1},z_{t})\in\mathbb{R}_{+}^{2}$ satisfying $T(\mathbf{x},z_{t-1},z_{t})\leq P^{l}$. We repeat a similar argument for the rest of the variables to obtain
\begin{equation}\label{krb}K_{\mathbf{r}}(\beta)=q^{-t}u_{1}(\beta)+O\big(q^{-t+1}P^{t-1}(1+n\lvert\beta\rvert)\big),\end{equation} where $u_{1}(\beta)$ here denotes the integral version of $u(\beta)$, which we define by
$$u_{1}(\beta)=\int_{T(\mathbf{x})\leq P^{l}}e\big(\beta T(\mathbf{x})^{k}\big)d\mathbf{x},$$
where $\mathbf{x}\in\mathbb{R}_{+}^{t}.$ Observe that by several changes of variables, one can rewrite $u_{1}(\beta)$ as
$$u_{1}(\beta)=k^{-1}l^{-t}\int_{0}^{n}w^{1/k-1}e(\beta w)\int_{\mathbf{X}\in \mathcal{M}}B(w,\mathbf{X})d\mathbf{X}dw,$$ with $B(w,\mathbf{X})=x_{1}^{1/l-1}\cdots x_{t-1}^{1/l-1}(w^{1/k}-x_{1}-\ldots-x_{t-1})^{1/l-1}$ and $\mathcal{M}\subset \mathbb{R}^{t-1}$ is the corresponding set determined by the underlying inequalities. Consequently, combining the formula for the Euler-Beta function and subsequent changes of variables we get
$$u_{1}(\beta)=c_{t,l}u_{2}(\beta),\ \ \ \ \ \text{where}\ u_{2}(\beta)=k^{-1}\int_{0}^{n}w^{t/kl-1}e(\beta w)dw.$$

We devote the rest of the proof to compute the error term when we approximate $u_{2}(\beta)$ by $u(\beta)$. We believe that working with $u(\beta)$ instead makes the analysis a bit more transparent and less tedious. Consider the function $$G(\gamma)=\sum_{1\leq y\leq \gamma}y^{t/kl-1}$$ and note that the Euler-Maclaurin formula (see Vaughan \cite[(4.8)]{Vau}) yields 
$$G(\gamma)=klt^{-1}\gamma^{t/kl}+O\big(Z(\gamma)\big),$$ where $Z(\gamma)=1+\gamma^{t/kl-1}.$ Observe that then Abel's summation formula, integration by parts and the previous discussion yield 
\begin{align*}u(\beta)&
=lt^{-1}n^{t/kl}e(\beta n)-2\pi i \beta\int_{0}^{n}lt^{-1}\gamma^{t/kl}e(\beta\gamma)d\gamma+O\big(Z(n)(1+n\lvert\beta\rvert)\big)
\\
&=u_{2}(\beta)+O\big(Z(n)(1+n\lvert\beta\rvert)\big).
\end{align*}
The lemma then follows combining the above approximation with (\ref{halpa}) and (\ref{krb}).
\end{proof}
Note that the error term in the above proposition differs from the trivial bound by a factor of $Pq^{-1}(1+n\vert\beta\rvert)^{-1}$, and this saving is gained by fixing $t-1$ variables in the expression for $K_{\mathbf{r}}(\beta)$ and using a one-dimensional argument. The saving obtained in Proposition \ref{prop200}, however, is not enough for our choice of the major arcs when $l$ is large enough. Likewise, the possible approaches involving the use of all of the variables don't seem to improve the error term substantially. We devote the rest of the section to provide an upper bound for $u(\beta).$

\begin{lem}\label{lem4.1}
Let $\lvert\beta\rvert\leq 1/2.$ Then one has
$$u(\beta)\ll \frac{P^{t}}{(1+n\lvert\beta\rvert)^{\gamma_{k,l}}},$$
where $\gamma_{k,l}=\min(1,t/kl).$
\end{lem}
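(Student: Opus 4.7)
The plan is to reduce the bound on $u(\beta)$ to the standard geometric sum estimate $|S(x)| \ll \min(x, |\beta|^{-1})$ for $S(x) = \sum_{m \leq x} e(\beta m)$, via Abel summation against the weights $m^{a-1}$ where $a = t/kl$. Since $n^a = P^t$, the trivial bound $u(\beta) \ll \sum_{m=1}^n m^{a-1} \ll n^a = P^t$ already handles the regime $n|\beta| \leq 1$, so I may assume $n|\beta| \geq 1$ henceforth (recall that $|\beta| \leq 1/2$ means $|\beta|$ equals the distance to the nearest integer).

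With $f(m) = m^{a-1}$, Abel summation gives
\[
k u(\beta) = f(n) S(n) - \sum_{m=1}^{n-1} \bigl(f(m+1) - f(m)\bigr) S(m),
\]
and since $f(m+1) - f(m) \ll m^{a-2}$ by the mean value theorem, one obtains
\[
|u(\beta)| \ll n^{a-1}|S(n)| + \sum_{m=1}^{n} m^{a-2}|S(m)|.
\]
Insert the standard estimate $|S(m)| \ll \min(m, |\beta|^{-1})$ and split the second sum at the threshold $M = \min(n, |\beta|^{-1}) = |\beta|^{-1}$ (using $n|\beta| \geq 1$).

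For the case $a \geq 1$ (i.e., $\gamma_{k,l} = 1$), the split yields
\[
\sum_{m \leq M} m^{a-1} + |\beta|^{-1} \sum_{M < m \leq n} m^{a-2} \ll M^a + |\beta|^{-1} n^{a-1} \ll n^{a-1}|\beta|^{-1},
\]
the last inequality being immediate since $M^a = |\beta|^{-a} \leq |\beta|^{-1} n^{a-1}$ when $a \geq 1$ and $n|\beta| \geq 1$. Combining with the boundary term $n^{a-1}|\beta|^{-1}$ produces $u(\beta) \ll n^a(n|\beta|)^{-1}$. For the case $0 < a < 1$ (i.e., $\gamma_{k,l} = a$), the same split yields $\sum_{m \leq M} m^{a-1} \ll M^a$ and $|\beta|^{-1}\sum_{M<m\leq n} m^{a-2} \ll |\beta|^{-1} M^{a-1} = |\beta|^{-a}$; likewise the boundary term $n^{a-1}|\beta|^{-1}$ is dominated by $|\beta|^{-a}$, since $(n|\beta|)^{a-1} \leq 1$ when $a < 1$ and $n|\beta| \geq 1$. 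This yields $u(\beta) \ll |\beta|^{-a} = n^a (n|\beta|)^{-a}$.

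In both regimes we conclude $u(\beta) \ll P^t(1+n|\beta|)^{-\gamma_{k,l}}$ as desired. The only delicate spot is the boundary value $a = 1$, where the tail integral $\int_M^n x^{a-2}\,dx$ produces a logarithm; this is sidestepped by noting that at $a=1$ the sum $u(\beta)$ is itself a geometric series, bounded directly by $|\beta|^{-1}$. Apart from that cosmetic point, the argument is entirely mechanical partial summation, so I do not anticipate any substantial obstacle.
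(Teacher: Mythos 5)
Your proof is correct and follows essentially the same route as the paper: a trivial bound when $n\lvert\beta\rvert\leq 1$, and otherwise the standard geometric-sum estimate combined with Abel summation and a split of the range at $\lvert\beta\rvert^{-1}$, yielding $\lvert\beta\rvert^{-t/kl}+P^{t}\lvert\beta\rvert^{-1}n^{-1}$. The only (harmless) difference is bookkeeping: the paper bounds the head sum trivially and applies partial summation with monotone weights only to the tail, which avoids the $\sum m^{t/kl-2}$ estimate and the separate remark you need at $t/kl=1$; since $k$, $l$, $t$ are fixed, your constants are acceptable.
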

\begin{proof}
When $\lvert\beta\rvert\leq n^{-1}$ one finds that
$$u(\beta)\ll \sum_{m=1}^{n}m^{t/kl-1}\ll P^{t},$$ which yields the required bound for that particular range. When $\lvert\beta\rvert> n^{-1}$ then denoting $M=\lfloor\lvert\beta\rvert^{-1}\rfloor$, we observe that $$\sum_{m=1}^{M}m^{t/kl-1}e(\beta m)\ll \lvert\beta\rvert^{-t/kl}.$$ For the remaining range we combine partial summation and the monotonicity of $m^{t/kl-1}$ to obtain 
$$\sum_{m>M}^{n}m^{t/kl-1}e(\beta m)\ll \lvert\beta\rvert^{-1}\big(\lvert\beta\rvert^{1-t/kl}+n^{t/kl-1}\big)=\lvert\beta\rvert^{-t/kl}+P^{t}\lvert\beta\rvert^{-1}n^{-1},$$ which delivers the required estimate.
\end{proof}

\section{Treatment of the major arcs and proof of the main theorem}\label{sec5}
In this section we prune back to the narrower set $\grP$ of major arcs and deduce an asymptotic formula for the contribution of such set. In view of the weak bound obtained in Proposition \ref{prop200} and the discussion made after it we are forced to introduce $k$-th powers of natural numbers and prime numbers, whose behaviour is much better understood, to provide strong enough estimates over $\grM$. For such purposes, it is convenient to present first some notation. Let \begin{equation*}X_{1}=2^{-1}(2k)^{-1/(k-1)}X.\end{equation*}Consider the exponential sums
$$g(\alpha)=\sum_{X_{1}< x\leq 2X_{1}}e(\alpha x^{k}),\ \ \ \ \ \ \ \ \ \ \  \ \ h(\alpha)=\sum_{p\leq X}e(\alpha p^{k}),$$
the weighted sums
\begin{equation*}v(\beta)=\sum_{X_{1}^{k}< x\leq (2X_{1})^{k}}k^{-1}x^{1/k-1}e(\beta x),\ \ \ \ \ \ \ w(\beta)=\sum_{2\leq x\leq n}k^{-1}x^{1/k-1}(\log x)^{-1}e(\beta x),\end{equation*} and the functions
$$V(\alpha,q,a)=q^{-1}S_{k}(q,a)v(\beta)\ \ \ \ \ \ \ \ \ \text{and}\ \ \ \ \ \ \ \ \  W(\alpha,q,a)=\varphi(q)^{-1}W(q,a)w(\beta),$$ where $S_{k}(q,a)$ and $W(q,a)$ were defined in (\ref{S(q,a)}) and (\ref{WW}) respectively. For the sake of simplicity we further define the auxiliary functions \begin{equation}\label{f*}f^{*}(\alpha)=U(\alpha,q,a),\ \ \ \ \ \ \ g^{*}(\alpha)=V(\alpha,q,a),\ \ \ \ \ \ \ h^{*}(\alpha)=W(\alpha,q,a)\end{equation} when $\alpha\in\grM(a,q)\subset \grM$ and $f^{*}(\alpha)=g^{*}(\alpha)=h^{*}(\alpha)=0$ for $\alpha\in\grm.$ We recall for convenience that $U(\alpha,q,a)$ was defined just before (\ref{Ualp}). Before providing an asymptotic formula for the major arc contribution it is convenient to consider for any set $\frak{B}\subset [0,1)$ the integral
$$R_{\frak{B}}(n)=\int_{\frak{B}}f(\alpha)^{s}g(\alpha)^{2}h(\alpha)^{2}e(-\alpha n)d\alpha,$$ and to define the singular integral as
$$J(n)=\int_{0}^{1}u(\beta)^{s}v(\beta)^{2}w(\beta)^{2}e(-\beta n)d\beta.$$ Here the reader might want to observe that as a consequence of orthogonality then $J(n)$ equals
$$\sum_{x_{1},\dots,x_{4},y_{1},\dots,y_{s}}k^{-4-s}(x_{1}x_{2}x_{3}x_{4})^{1/k-1}\big(\log x_{3}\log x_{4}\big)^{-1}y_{1}^{t/kl-1}\cdots y_{s}^{t/kl-1},$$
where the sum is over $x_{1},\ldots,x_{4},y_{1},\ldots,y_{s}$ satisfying $x_{1}+\ldots+x_{4}+y_{1}+\ldots+y_{s}=n$ with
$$X_{1}^{k}<x_{1},x_{2}\leq (2X_{1})^{k},\ \ \ \ \ \ \ 2\leq x_{3},x_{4}\leq n\ \ \ \ \ \ \text{and}\ \ \ \ \ \ \ 1\leq y_{j}\leq n\ \ \ \ \ (1\leq j\leq s).$$
It is worth observing that then one obtains the lower bound \begin{equation}\label{JNN}J(n)\gg P^{st}X^{4}n^{-1}(\log n)^{-2}.\end{equation}
\begin{prop}\label{prop55} Let $s\geq \max(1,k-2).$ One has that
\begin{equation}\label{Rma}R_{\grM}(n)=\frak{S}(n)J(n)+O\big(P^{st}X^{4}n^{-1}(\log n)^{-2-\delta}\big).\end{equation} Moreover, if $s$ satisfies the hypothesis of Lemma \ref{lem4.3} then $R_{\grM}(n)\gg P^{st}X^{4}n^{-1}(\log n)^{-2}.$
\end{prop}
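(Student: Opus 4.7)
\emph{Plan of the proof.} The strategy is to first prune from $\grM$ down to the narrower set $\grP$, then on $\grP$ replace each of $f,g,h$ by its respective approximation $f^{*},g^{*},h^{*}$ using Proposition \ref{prop200} together with the classical major-arc approximations for the standard Waring sum $g$ and the prime Waring sum $h$, and finally complete the resulting truncated $\beta$-integral and truncated sum over $q$ to recover $\mathfrak{S}(n)J(n)$. The lower bound is then a direct consequence of the lower bounds on $\mathfrak{S}(n)$ and $J(n)$ already established.

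\emph{Step 1 (Pruning to $\grP$).} As the author notes after Proposition \ref{prop200}, the error term there is too weak to be useful on the full set $\grM$. Accordingly, on $\grM\setminus\grP$ I would use the trivial bound $|f(\alpha)|\ll P^{t}$ and extract the saving from $g$ and $h$. For $\alpha\in\grM(a,q)$ with $\beta=\alpha-a/q$, standard treatments (Theorems 4.1 and 3.1 of Vaughan \cite{Vau} for $g$, together with Siegel--Walfisz for $h$) give $|g(\alpha)|\ll X\bigl(q+nq|\beta|\bigr)^{-1/k}$ and a comparable bound for $|h(\alpha)|$, each decaying in $q$ and $n|\beta|$. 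Dissecting $\grM\setminus\grP$ dyadically in the parameters $q$ and $1+n|\beta|$, and summing the resulting contributions (using $\sum_{q}w_{k}(q)^{2}\ll Q^{\eps}$ from (\ref{pri})), one shows that
\[\int_{\grM\setminus\grP}|f|^{s}|g|^{2}|h|^{2}\,d\alpha\ll P^{st}X^{4}n^{-1}(\log n)^{-2-\delta}\]
for a suitable $\delta>0$, which is absorbed into the error term of (\ref{Rma}).

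\emph{Step 2 (Replacement on $\grP$).} On $\grP$ we have $q\leq\log P$ and $1+n|\beta|\leq \log P$, so Proposition \ref{prop200} yields $f=f^{*}+O(P^{t-1}(\log P)^{2})$, while the results of Vaughan \cite[Chap.\ 4]{Vau} give $g=g^{*}+O(q^{1/2+\eps})$ and $h=h^{*}+O(X\exp(-c\sqrt{\log X}))$. Expanding $f^{s}g^{2}h^{2}-(f^{*})^{s}(g^{*})^{2}(h^{*})^{2}$ as a telescoping sum and estimating each piece using the trivial upper bounds $|f^{*}|\ll P^{t}w_{k}(q)$, $|g^{*}|\ll X w_{k}(q)$, $|h^{*}|\ll X(\log X)^{-1}w_{k}(q)$ together with (\ref{pri}), I would show that the total error is again bounded by $P^{st}X^{4}n^{-1}(\log n)^{-2-\delta}$.

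\emph{Step 3 (Completion).} After replacement, $R_{\grP}(n)$ has the shape
\[R_{\grP}(n)=\sum_{q\leq\log P}S_{n}(q)\int_{|\beta|\leq \log P/(qn)}u(\beta)^{s}v(\beta)^{2}w(\beta)^{2}e(-\beta n)\,d\beta,\]
with $S_{n}(q)$ as in \S\ref{sec3}. Using Lemma \ref{lem4.1} for the decay of $u(\beta)$ and standard decay estimates for $v(\beta),w(\beta)$, I would extend the inner integral to $[-1/2,1/2]$ at the cost of an acceptable error, and then extend the outer sum to all $q\in\mathbb{N}$ by invoking (\ref{qkes}) of Proposition \ref{prop3}. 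This produces $\mathfrak{S}(n)J(n)$ with the claimed error, proving (\ref{Rma}). For the lower bound, under the hypothesis of Lemma \ref{lem4.3}, Proposition \ref{prop3} gives $\mathfrak{S}(n)\gg 1$, and combining this with (\ref{JNN}) yields $R_{\grM}(n)\gg P^{st}X^{4}n^{-1}(\log n)^{-2}$.

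\emph{Main obstacle.} The pruning step is the technically delicate part: since the approximation for $f$ in Proposition \ref{prop200} degrades badly beyond the $\log P$-scale, the whole pruning argument must be driven by the decay of $g$ and $h$, with $f$ treated trivially. One needs to check that with only two copies each of $g$ and $h$, the combined saving dominates the factor $P^{st}$ arising from the trivial bound on $f^{s}$, which is where the hypothesis $s\geq\max(1,k-2)$ enters.
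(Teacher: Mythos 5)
Your Steps 2 and 3 (replacement on $\grP$ via Proposition \ref{prop200}, Vaughan's and Hua's major-arc approximations for $g$ and $h$, and completion of the $q$-sum and $\beta$-integral using (\ref{qkes}) and the decay of $v$) match the paper's argument. The genuine gap is in Step 1: you cannot prune from $\grM=\grM(X)$ all the way to $\grP=\grM(\log P)$ with $f$ bounded trivially and the saving extracted only from $g^{2}h^{2}$. For the prime sum $h$ there is no usable pointwise bound on arcs with $\log P<q\leq X$ (Siegel--Walfisz only reaches $q\leq(\log X)^{A}$); the available tool is the mean-square bound (\ref{orti}), but its factor $(\log X)^{-2}$ is exactly the size already present in the main term $J(n)$, so it supplies no extra $(\log n)^{-\delta}$. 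The $q$-decay must then come from $g^{2}$ alone, i.e.\ from $w_{k}(q)^{2}$, and the tail $\sum_{q>\log P}w_{k}(q)^{2}$ does not tend to zero (its squarefree part behaves like $\sum_{p>\log P}k^{2}/p$), while the alternative pointwise bound $|g(\alpha)|\ll X\big(q(1+n|\beta|)\big)^{-1/k}$ loses a positive power of $X$ when integrated over arcs with $q$ up to $X$ for $k>2$. So with only two copies each of $g$ and $h$ the required error $P^{st}X^{4}n^{-1}(\log n)^{-2-\delta}$ is out of reach by this route.

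The paper avoids this by pruning in two stages and by extracting a nontrivial saving from $f$ itself. On $\grM\setminus\grN$ (where the approximation of Proposition \ref{prop200} is useless because its error $qP^{t-1}(1+n|\beta|)$ is too large) it applies Weyl's inequality in the form of Lemma \ref{lem2} to one copy of $f$, winning a power $P^{-2^{1-kl}+\varepsilon}$, with $g^{*}$, (\ref{pro}), (\ref{orti}) and (\ref{pri}) supplying the factor $X^{4}n^{-1}$. On $\grN\setminus\grP$ it uses $f(\alpha)\ll w_{k}(q)P^{t}$ (now legitimate via Proposition \ref{prop200}), so the $q$-sum carries $w_{k}(q)^{s+2}$, which does converge by (\ref{pri}), and the restriction $|\beta|>(\log P)q^{-1}n^{-1}$ or $q>\log P$ yields the extra $(\log P)^{-\delta}$. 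This is also where the role of the hypothesis on $s$ differs from what you describe: it is needed for the convergence of sums like $\sum_{q}qw_{k}(q)^{s+2}$ and for the singular series bounds of Proposition \ref{prop3}, not to make the $g,h$ savings dominate $P^{st}$. To repair your proof you should insert the intermediate arcs $\grN$ and the two ingredients above; the rest of your outline then goes through as in the paper.
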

\begin{proof}
Observe that Lemma 6.1 of Vaughan \cite{Vau} for the choice of $X_{1}$ made yields that whenever $\alpha\in\grM$ then
\begin{equation}\label{pro}g(\alpha)-g^{*}(\alpha)\ll q^{1/2+\varepsilon}.\end{equation} 
Likewise, Lemma 6.2 of Vaughan \cite{Vau} delivers the bound $v(\beta)\ll X(1+n\lvert\beta\rvert)^{-1}$, whence combining such estimate with (\ref{putill}) we get \begin{equation}\label{ges} g^{*}(\alpha)\ll w_{k}(q)X(1+n\lvert\beta\rvert)^{-1}.\end{equation}
It is also worth noting that for any $q\leq X$, the number $N(q)$ of pairs of primes $(p_{1},p_{2})$ with $p_{1}^{k}\equiv p_{2}^{k}\pmod{q}$ and $p_{1},p_{2}\leq X$ satisfies $N(q)\ll X^{2}(\log X)^{-2}q^{-1+\varepsilon}.$ Consequently, by orthogonality we find that
\begin{equation}\label{orti}\sum_{a=1}^{q}\lvert h(a/q+\beta)\rvert^{2}\ll X^{2}(\log X)^{-2}q^{\varepsilon}.\end{equation}
Combining the previous discussion with Lemma \ref{lem2} we obtain
\begin{align*}R_{\grM\setminus \grN}(n)\ll P^{st-2^{-lk}+\varepsilon}X^{4}n^{-1}\big(1+\sum_{q\leq X}w_{k}(q)^{2}\big)\ll P^{st-\delta}X^{4}n^{-1},
\end{align*}
where in the last step we applied (\ref{pri}). The reader might want to observe that the bounds available for the exponential sums of $k$-th powers are robust enough to enable us to prune back to a set of narrower major arcs. It becomes transparent that in view of the weak estimates for $f(\alpha)$ available when $\alpha\in\grM\setminus \grN$, the use of such Weyl sums in this setting seems inevitable. Before moving on, it is convenient to observe that whenever $\alpha\in\grN$ then equations (\ref{piu}), (\ref{Ualp}) and Proposition \ref{prop200} deliver $f(\alpha)\ll w_{k}(q)P^{t}$. Likewise, observe that (\ref{pro}) and (\ref{ges}) yield the estimate $g(\alpha)\ll w_{k}(q)X(1+n\lvert\beta\rvert)^{-1}$ for the same range. Consequently, combining the previous discussion with (\ref{pri}) and (\ref{orti}) we obtain
\begin{align*}R_{\grN\setminus\grP}(n)\nonumber
&\ll P^{st}X^{4}n^{-1}(\log P)^{-2}\big((\log P)^{-1}\sum_{q\leq \log P}q^{1+\varepsilon}w_{k}(q)^{s+2}+\sum_{q>\log P}w_{k}(q)^{s+2}\big)
\\
&\ll P^{st}X^{4}n^{-1}(\log P)^{-2-\delta}.
\end{align*}

In order to make further progress in the analysis, we note that 
\begin{equation*}\label{hes}h(\alpha)=W(\alpha,q,a)+O(Xe^{-C_{1}\sqrt{\log X}})\end{equation*} for some $C_{1}>0$, which is an immediate consequence of Lemma 7.15 of Hua \cite{Hua2}. Observe as well that Proposition \ref{prop200} delivers $f(\alpha)^{s}-f^{*}(\alpha)^{s}\ll P^{st-1+\varepsilon}$ whenever $\alpha\in\grP$. Combining these estimates with (\ref{pro}) we find that
$$\int_{\grP} \big\lvert f(\alpha)^{s} g(\alpha)^{2}h(\alpha)^{2}-f^{*}(\alpha)^{s} g^{*}(\alpha)^{2}h^{*}(\alpha)^{2}\big\rvert d\alpha\ll P^{st}X^{4}n^{-1}e^{-C\sqrt{\log X}}.$$
Observe as well that (\ref{qkes}) and the estimate for $v(\beta)$ stated before (\ref{ges}) deliver the bounds
$$\sum_{q>Q}\lvert S_{n}(q)\rvert\ll Q^{\varepsilon-1/k},\ \ \ \ \ \int_{\lvert\beta\rvert>(\log P)q^{-1}n^{-1}}\lvert v(\beta)\rvert^{2} d\beta\ll X^{2}n^{-1}q(\log P)^{-1}$$ for any $Q>0$ and $q\leq \log P$ respectively. Consequently, the above estimates and a change of variables yield
$$\int_{\grP}f^{*}(\alpha)^{s}g^{*}(\alpha)^{2}h^{*}(\alpha)^{2}e(-\alpha n)d\alpha=\frak{S}(n)J(n)+O\big(P^{st}X^{4}n^{-1}(\log P)^{-2-\delta}\big),$$whence the preceding discussion and the pruning bounds for $R_{\grM\setminus \grN}(n)$ and $R_{\grN\setminus\grP}(n)$ deliver the main result of the proposition. The second part of the proposition follows combining  (\ref{JNN}) with (\ref{Rma}) and Proposition \ref{prop3}.
\end{proof}
We now gather all the work done previously to prove Theorem \ref{thm1.2} by using the following quantitative version.
\begin{prop}\label{propthm}
Let $s\geq 4k-3$ and $H=k(k+1).$ Then, one has the lower bound
$$\int_{0}^{1} \mathcal{F}(\alpha)^{H}f(\alpha)^{s}g(\alpha)^{2}h(\alpha)^{2}e(-\alpha n)d\alpha\gg |\mathcal{S}_{1}|^{H}|\mathcal{S}_{2}|^{H}P^{st}X^{4}n^{-1}(\log n)^{-2}.$$
\end{prop}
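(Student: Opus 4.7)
The plan is a Hardy--Littlewood dissection $[0,1) = \grM \cup \grm$, treating the two ranges separately. The key ideas are: handle $\mathcal{F}^H$ on the major arcs by orthogonality so as to reduce the problem to instances of the lower bound already established in Proposition \ref{prop55}, while handling $\mathcal{F}^H$ on the minor arcs via the mean value estimate of Proposition \ref{prop22}.

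For the minor arcs, I would use the trivial bounds $|f(\alpha)| \leq P^t$, $|g(\alpha)| \ll X$ and $|h(\alpha)| \ll X$, and observe that $H/2 = k(k+1)/2$ meets the hypothesis of Proposition \ref{prop22}. This yields
\begin{equation*}
\int_\grm |\mathcal{F}(\alpha)|^H |f|^s |g|^2 |h|^2 d\alpha \ll P^{st} X^4 \int_\grm |\mathcal{F}|^H d\alpha \ll |\mathcal{S}_1|^H |\mathcal{S}_2|^H P^{st} X^4 n^{-1} X^{-\varphi_k + \varepsilon}.
\end{equation*}
Since $\varphi_k > 0$, this is a power saving, hence $o\bigl(|\mathcal{S}_1|^H |\mathcal{S}_2|^H P^{st} X^4 n^{-1} (\log n)^{-2}\bigr)$.

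On the major arcs, expanding by orthogonality gives
\begin{equation*}
\int_\grM \mathcal{F}^H f^s g^2 h^2 e(-\alpha n) d\alpha = \sum_{(\mathbf{x}, \mathbf{m}) \in \mathcal{S}_1^H \times \mathcal{S}_2^H} R_\grM\bigl(n - M(\mathbf{x}, \mathbf{m})\bigr),
\end{equation*}
with $M(\mathbf{x}, \mathbf{m}) = \sum_{i=1}^H (x_i + m_i)^k$. The explicit choices of $C_1, C_2$ in (\ref{C1}) are tailored so that $t_1 C_1^l + l C_2^l \leq 2\bigl(k(k+1)2^{k+1}\bigr)^{-1/k}$, combined with $x_i \leq t_1 P_1^l$ and $m_i \leq l P_2^l$, a direct computation delivers $M(\mathbf{x}, \mathbf{m}) \leq n/2$, so each $n'' := n - M(\mathbf{x}, \mathbf{m}) \in [n/2, n]$. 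The hypothesis $s \geq 4k - 3$ gives $s+3 \geq 4k$, which meets the local solubility condition of Lemma \ref{lem4.3} for every prime $p$ (since $(k, p^\tau(p-1)) \leq k$ and $2^{\tau+2} \leq 4k$ whenever $p^\tau \| k$). Hence Proposition \ref{prop55}, applied with target $n''$, yields $R_\grM(n'') = \mathfrak{S}(n'') J(n'') + O\bigl(P^{st} X^4 n^{-1} (\log n)^{-2-\delta}\bigr)$ with $\mathfrak{S}(n'') J(n'') \gg P^{st} X^4 n^{-1} (\log n)^{-2}$ uniformly in this range. Summing over the $|\mathcal{S}_1|^H |\mathcal{S}_2|^H$ tuples yields the required lower bound on the major arc contribution.

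The full integral counts weighted representations of $n$ and is therefore a non-negative real number; taking real parts, the positive main term from the major arcs dominates both its own error term and the minor arc estimate, delivering the claim. The main obstacle is verifying that the lower bound of Proposition \ref{prop55} extends uniformly to $R_\grM(n'')$ for $n'' \in [n/2, n]$, where the major arcs $\grM = \grM(X)$ and the generating functions $f, g, h$ remain anchored to $n$ rather than $n''$. This requires re-examining the singular integral analysis to confirm that shifting the target from $n$ to $n''$ within a bounded ratio preserves both the size of $\mathfrak{S}(n'')$ (which holds as the conditions of Lemma \ref{lem4.3} are $n''$-independent) and that of $J(n'')$, but none of the approximations in Propositions \ref{prop200} or \ref{prop55} are sensitive to this perturbation.
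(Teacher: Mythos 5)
Your proposal is correct and follows essentially the same route as the paper: trivial bounds on $f,g,h$ together with the mean value estimate of Proposition \ref{prop22} (with $2s=H$) on the minor arcs, and on the major arcs an orthogonality expansion of $\mathcal{F}(\alpha)^{H}$ whose shifts $m=\sum_{i}(x_{i}+m_{i})^{k}$ are confined to $m\leq n/2$ by the choice of $C_{1},C_{2}$, so that Proposition \ref{prop55} applies to each target $n-m\geq n/2$. Your explicit verification that $s\geq 4k-3$ meets the local solubility conditions of Lemma \ref{lem4.3}, and your remark on the uniformity of the lower bound for shifted targets, only make explicit what the paper leaves implicit.
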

\begin{proof}
It is worth observing that the estimate over the minor arcs on Proposition \ref{prop22} and the trivial bounds for $f(\alpha),$ $g(\alpha)$ and $h(\alpha)$ yield
\begin{equation}\label{min}\int_{\grm} \lvert\mathcal{F}(\alpha)\rvert^{H}\lvert f(\alpha)\rvert^{s}\lvert g(\alpha)\rvert^{2}\lvert h(\alpha)\rvert^{2}d\alpha\ll |\mathcal{S}_{1}|^{H}|\mathcal{S}_{2}|^{H}P^{st}X^{4}n^{-1-\delta}\end{equation} for some $\delta>0$. In order to compute the major arc contribution it is convenient to define for each $m\in\mathbb{N}$ the counting function
$$Q(m)=\Big\lvert \Big\{(\mathbf{x},\mathbf{y})\in \mathcal{S}_{1}^{H}\times \mathcal{S}_{2}^{H} :\ \ \ m=\sum_{i=1}^{H}(y_{i}+z_{i})^{k}\Big\}\Big\rvert.$$Observe that with the previous notation one finds that
$$\mathcal{F}(\alpha)^{H}=\sum_{m}Q(m)e(\alpha m).$$ Moreover, using (\ref{C1}), (\ref{Pt}) and the definitions of $\mathcal{S}_{1}$ and $\mathcal{S}_{2}$ described after those equations we have $Q(m)=0$ for $m>n/2.$ Therefore, Proposition \ref{prop55} yields
\begin{align}\label{Ma}\int_{\grM} \mathcal{F}(\alpha)^{H}f(\alpha)^{s}g(\alpha)^{2}h(\alpha)^{2}e(-\alpha n)d\alpha
&=\sum_{m\leq n/2}Q(m)R_{\grM}(n-m)\nonumber
\\
&\gg |\mathcal{S}_{1}|^{H}|\mathcal{S}_{2}|^{H} P^{st}X^{4}n^{-1}(\log n)^{-2}.
\end{align}
The combination of the equations (\ref{min}) and (\ref{Ma}) concludes the proof. Here the reader might want to observe that the choices for $C_{1}$ and $C_{2}$ guarantee that we get the expected lower bound. 
\end{proof}

\emph{Proof of Theorem \ref{thm1.2}}. Note that the integral in Proposition \ref{propthm} counts the number of solutions of equation (\ref{conje}) with certain multiplicities. Consequently, for all $r\geq 4$ we have\begin{equation*}\label{pytr}S_{C}(k,l,r)\leq k(k+1)+4k-3,\end{equation*}
which delivers the same bound for $P_{C}(k,r)$ and yields $R_{C}(k)\leq 4$. As experts will realise, we could have avoided including the extra $4k-3$ copies of $f(\alpha)$ by introducing suitable weights for each of $x$ and $m$ in the definition of $\mathcal{F}(\alpha)$ to exploit the information given by such variables in the analysis of the singular series. However, we have prioritised the simplicity of the exposition over the preciseness of the upper bound for $P_{C}(k,r)$.
\section{The case $k=2$}\label{sec6}
We briefly sketch the proof for $R_{4}(2)\leq 2$. For the rest of the exposition then we take $t=4l$. Let $$g(\alpha)=\sum_{X/2<x\leq X}e(\alpha x^{2}),$$and on recalling (\ref{falphap}) consider the mean value
$$\int_{0}^{1}\lvert g(\alpha)\rvert^{2} \lvert f(\alpha,\mathcal{S}_{t})\rvert^{4}d\alpha,$$which by orthogonality counts the solutions to the equation $$x_{1}^{2}+y_{1}^{2}+y_{2}^{2}=x_{2}^{2}+y_{3}^{2}+y_{4}^{2}$$with $X/2\leq x_{i}\leq X$ and $y_{i}\in\mathcal{S}_{t}$. Observe that by a divisor function argument, the number of solutions of $$y_{1}^{2}+y_{2}^{2}=y_{3}^{2}+y_{4}^{2}$$ is $O(n^{\varepsilon}\lvert\mathcal{S}_{t}\rvert^{2})$, and hence the contribution of the subset of solutions satisfying $x_{1}=x_{2}$ is $O(n^{1/2+\varepsilon}\lvert\mathcal{S}_{t}\rvert^{2})$. Likewise, the number of solutions of the above equation with $x_{1}\neq x_{2}$ is $O(n^{\varepsilon}\lvert \mathcal{S}_{t}\rvert^{4})$, whence equation (\ref{1.1}) and the above estimates deliver
$$\int_{0}^{1}\lvert g(\alpha)\rvert^{2} \lvert f(\alpha,\mathcal{S}_{t})\rvert^{4}d\alpha\ll n^{1/2+\varepsilon}\lvert\mathcal{S}_{t}\rvert^{2}+n^{\varepsilon}\lvert \mathcal{S}_{t}\rvert^{4}\ll n^{\varepsilon}\lvert \mathcal{S}_{t}\rvert^{4}.$$ Define $\grM_{\tau}=\grM(P^{\tau})$ and $\grm_{\tau}=[0,1)\setminus \grM_{\tau}$ for some small enough $\tau>0$. Then, combining the above estimate with Lemma \ref{lem2} one gets
$$\int_{\grm_{\tau}}\lvert f(\alpha,\mathcal{S}_{t})\rvert^{4}\lvert g(\alpha)\rvert^{2} \lvert f(\alpha)\rvert^{3}d\alpha\ll \lvert \mathcal{S}_{t}\rvert^{4}P^{3t-\delta}.$$ 
The reader might want to observe that in order to ensure local solubility and the convergence of the singular series, one should take $3$ copies of $f(\alpha)$ instead of just $2$ since we only have two Weyl sums of degree $2$ available. The rest of the analysis of the major arcs is done using the estimates obtained throughout the memoir. This argument then yields $P_{4}(2,2)\leq 7$. As experts will realise, one could prove the bound $P_{4}(2,2)\leq 5$ by introducing suitable weights in the definition of $f(\alpha,\mathcal{S}_{t})$ to simplify the singular series analysis and just use one copy of $f(\alpha)$. We have avoided discussing such refinement here for the sake of brevity.
\section{Proof of Theorem \ref{thm1.3}}\label{sec7}
We combine the work of previous sections with slightly different ideas employed in the minor arc analysis to give a proof of Theorem \ref{thm1.3}. We first introduce an exponential sum restricted to elements in a convenient set that will provide the saving needed for the minor arc contribution. On recalling the parameter $\xi_{0}(k,l)$ defined before Theorem \ref{thm1.3}, we write $\xi=\xi_{0}(k,l)$ for ease of notation and consider $$\xi_{1}=\xi-1,\ \ \ \ \ \ C_{3}=\big(2^{k+1}k(k+1)\big)^{-1/kl}\xi_{1}^{-1/l}.$$ Recalling (\ref{Sr}) as well, take $P_{3}=C_{3}P$ and set $\mathcal{S}=\mathcal{S}_{\xi_{1}}(P_{3})$. It is then convenient to consider for $P/2\leq p\leq P$ prime the exponential sums
\begin{equation*}f_{p}(\alpha)=\sum_{x\in\mathcal{S}}e\big(\alpha(x+p^{l})^{k}\big)\ \ \ \ \ \ \ \  \text{and}\ \ \ \ \ \ \ \mathcal{G}(\alpha)=\sum_{P/2<p\leq P}f_{p}(\alpha).\end{equation*}

\begin{prop}\label{prop223}
Let $\alpha\in[0,1)$ and let $M>0$ be a parameter with $M\leq P.$ Denote by $\grm_{M}$ the set of $\alpha$ with the property that $\alpha=\beta+a/q$ with $a\in\mathbb{Z},$ $q\in\mathbb{N}$ and $(a,q)=1$ satisfying $\lvert\beta\rvert\leq (2kqX)^{-1}$, $q\leq 2kX$ and such that whenever $q\leq M$ one has $\lvert\beta\rvert\geq Mq^{-1}n^{-1}.$ Then for each $\alpha\in\grm_{M}$ one gets
$$\mathcal{G}(\alpha)\ll |\mathcal{S}|P^{1+\varepsilon}M^{-1/k(k-1)}X^{\delta_{\xi_{1}}/2},$$ where $\delta_{\xi_{1}}$ was defined in (\ref{cono}). Moreover, for $s\geq k(k+1)/2$ we obtain the mean value
\begin{equation*}\int_{\grm_{M}}\lvert\mathcal{G}(\alpha)\rvert^{2s}d\alpha\ll |\mathcal{S}|^{2s}P^{2s}M^{-1}X^{-k+\Delta_{\xi_{1}}+\varepsilon},\end{equation*}where $\Delta_{\xi_{1}}=\delta_{\xi_{1}}k(k+1)/2.$
\end{prop}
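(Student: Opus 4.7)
The approach is to adapt the argument of Proposition \ref{prop22}, with the elements $m\in\mathcal{S}_2$ replaced throughout by $p^l$ for primes $p\in(P/2,P]$. For each such prime, the binomial expansion delivers
\[
\alpha(x+p^l)^k=\alpha p^{lk}+\alpha x^k+\bfnu^{(k-1)}(x)\cdot\gamma(p),\qquad \gamma_j(p)=\alpha\binom{k}{j}p^{l(k-j)},\quad 1\le j\le k-1,
\]
so that in complete analogy with (\ref{ec2.8}) one obtains
\[
\sum_{P/2<p\le P}\lvert f_p(\alpha)\rvert^{2s}=\sum_p\Big|\sum_{\bfn\in\mathcal{N}}a(\bfn)e(\bfn\cdot\gamma(p))\Big|^2,
\]
where $a(\bfn)$ is defined analogously to (\ref{a(n}) but with $\mathbf{x}\in\mathcal{S}^s$. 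An application of Cauchy's inequality in the style of (\ref{Falphat}) then gives $\lvert\mathcal{G}(\alpha)\rvert^{2s}\le\pi(P)^{2s-1}\sum_p\lvert f_p(\alpha)\rvert^{2s}$.

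The central step is to establish a spacing for $\{\gamma_{k-1}(p)\bmod 1\}_p=\{k\alpha p^l\bmod 1\}_p$ across distinct primes, sharp enough that Lemma 5.3 of Vaughan yields $\sum_p\lvert f_p(\alpha)\rvert^{2s}\ll M^{-1}X^{k(k-1)/2}\sum_\bfn\lvert a(\bfn)\rvert^{2}$, an improvement by a factor of $M^{-1}$ over the analogous bound in Proposition \ref{prop22}. For $\alpha\in\grm_M$ there exist $a,q$ with $(a,q)=1$, $q\le 2kX$ and $\lvert\beta\rvert\le(2kqX)^{-1}$; for distinct primes $p_1,p_2\in(P/2,P]$ one has
\[
k\alpha(p_1^l-p_2^l)=\frac{ka(p_1^l-p_2^l)}{q}+k\beta(p_1^l-p_2^l),
\]
and the second term has modulus at most $1/(2q)$ since $\lvert p_1^l-p_2^l\rvert\le X$. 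When $q(q,k)^{-1}\nmid p_1^l-p_2^l$ the first term contributes at least $1/q$ modulo $1$, providing a baseline spacing $\gg 1/q$. Potential coincidences $q(q,k)^{-1}\mid p_1^l-p_2^l$ are handled by exploiting the auxiliary restriction that $\lvert\beta\rvert\ge Mq^{-1}n^{-1}$ when $q\le M$: since $\lvert p_1^l-p_2^l\rvert\gg P^{l-1}$ for distinct primes, the contribution of the $\beta$ term is then quantifiable, and a counting argument for the pairs of primes with $q(q,k)^{-1}\mid p_1^l-p_2^l$ via $l$-th power residues ensures the $M^{-1}$ saving in the effective spacing is uniformly available.

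Once the large-sieve inequality is secured, Proposition \ref{prop1} bounds $\sum_\bfn\lvert a(\bfn)\rvert^{2}\le J^{(k-1)}_{s,\xi_1}(P_3)\ll\lvert\mathcal{S}\rvert^{2s}P^{-lk(k-1)/2+l\delta_{\xi_1}k(k-1)/2+\varepsilon}$. Choosing $2s=k(k-1)$, combining the above inequalities with $P^l=X$ and simplifying yields the pointwise estimate $\mathcal{G}(\alpha)\ll\lvert\mathcal{S}\rvert P^{1+\varepsilon}M^{-1/k(k-1)}X^{\delta_{\xi_1}/2}$. For the mean value, I would integrate the Cauchy--large-sieve inequality over $\grm_M$ and use orthogonality to convert $\int_{\grm_M}\sum_\bfn\lvert a(\bfn)\rvert^{2}\,d\alpha\le J^{(k)}_{s,\xi_1}(P_3)$, then invoke Proposition \ref{prop1} with $s\ge k(k+1)/2$ to obtain the second claim. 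The principal obstacle is precisely the spacing step: in Proposition \ref{prop22} the constraint $m\in\mathcal{S}_2\subset[1,X/(2k)]$ together with Dirichlet's lower bound $q>X$ ruled out the coincidences $q(q,k)^{-1}\mid m_1-m_2$ outright, whereas here $p^l$ ranges up to $X$ and such coincidences can occur, so the $M^{-1}$ gain must be extracted from the $\grm_M$ lower bound on $\lvert\beta\rvert$ coupled with a careful count of prime pairs having equal $l$-th power residues.
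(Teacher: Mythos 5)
Your outline coincides with the paper's own strategy: expand $\alpha(x+p^{l})^{k}$ to isolate $\gamma_{k-1}=k\alpha p^{l}$, pass to the coefficients $a(\bfn)$, apply Cauchy's inequality and Vaughan's Lemma 5.3, bound $\sum_{\bfn}|a(\bfn)|^{2}$ by $J^{(k-1)}_{s,\xi_{1}}(P_{3})$ with $2s=k(k-1)$ for the pointwise estimate, and integrate to reach $J^{(k)}_{s,\xi_{1}}(P_{3})$ with $s\geq k(k+1)/2$ for the mean value. However, the step you yourself call the principal obstacle is exactly where the proposal does not close, and one of your stated goals is unattainable. Lemma 5.3 applied to $\sum_{\bfn\in\mathcal{N}}a(\bfn)e(\bfn\cdot\gamma)$ always carries a factor at least $\prod_{i\leq k-2}(sX^{i})\cdot sX^{k-1}\asymp X^{k(k-1)/2}$, whatever the spacing, so the bound $\sum_{p}|f_{p}(\alpha)|^{2s}\ll M^{-1}X^{k(k-1)/2}\sum_{\bfn}|a(\bfn)|^{2}$ cannot come out of it once $M$ is a power of $P$; the attainable (and sufficient) statement, which the paper proves after splitting the primes into residue classes, is $\ll PM^{-1}X^{k(k-1)/2+\varepsilon}\sum_{\bfn}|a(\bfn)|^{2}$, and this still yields the stated pointwise bound after Cauchy with $\pi(P)^{2s-1}$.

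The substantive gap is the handling of the coincidences $q_{1}\mid p_{1}^{l}-p_{2}^{l}$, $q_{1}=q(q,k)^{-1}$. For $q\leq M$, the input you propose, namely $|p_{1}^{l}-p_{2}^{l}|\gg P^{l-1}$ together with $|\beta|\geq Mq^{-1}n^{-1}$, only gives a spacing $\gg MP^{l-1}(qn)^{-1}$, short of the needed $\gg MP^{l-1}n^{-1}$ by the factor $q$; with $q$ of size comparable to $M$ this wipes out the $M^{-1/k(k-1)}$ saving entirely. The paper's remedy is to partition the primes into $\ll q_{1}^{\varepsilon}$ classes according to the value of $p^{l}$ modulo $q_{1}$, so that inside a class a coincidence forces $p_{1}\equiv p_{2}\pmod{q_{1}}$, hence $|p_{1}-p_{2}|\geq q_{1}\gg q/k$ and $|p_{1}^{l}-p_{2}^{l}|\gg P^{l-1}q$, and the factor $q$ cancels, giving spacing $\gg P^{l-1}Mn^{-1}$. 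Moreover, when $q>M$ the lower bound on $|\beta|$ is simply unavailable and the $M^{-1}$ gain does not come from spacing at all: for $M/k<q_{1}\leq P$ each class is further subdivided into $\ll Pq_{1}^{-1}\ll kPM^{-1}$ subclasses containing no pair congruent modulo $q_{1}$, the saving being carried by the subclass count, while for $q_{1}>P$ two distinct primes of $(P/2,P]$ cannot be congruent modulo $q_{1}$, so coincidences vanish. Your ``counting argument via $l$-th power residues'' gestures at this but is never carried out, and since $|f_{p}(\alpha)|^{2s}\geq 0$ one cannot simply discard the bad pairs, so some such partition is essential; without this three-case analysis the key large-sieve inequality underlying both claims of the proposition remains unproved, even though the remaining steps of your plan match the paper.
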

\begin{proof}
Recalling the notation used in the proof of Proposition \ref{prop22} we find that
\begin{equation*}\sum_{P/2< p\leq P}\lvert f_{p}(\alpha)\rvert^{2s}=\sum_{P/2< p\leq P}\Big\lvert\sum_{\bfn\in\mathcal{N}}a(\bfn)e\big(\bfn\cdot\gamma(p^{l})\big)\Big\rvert^{2},\end{equation*}
where $\mathcal{N}$ here denotes the set
$$\mathcal{X}(\bfn)=\Big\{\mathbf{x}\in\mathcal{S}^{s}:\ x_{1}^{i}+\ldots+x_{s}^{i}=n_{i}\ (1\leq i\leq k-1)\Big\}$$ and the coefficient $a(\bfn)$ is defined in the same way as in (\ref{a(n}). Let $q_{1}=q(q,k)^{-1}.$ Before going into the discussion for the spacing modulo $1$ of $\{\gamma(p^{l})\}_{p}$, the reader might find useful to observe that for fixed $h\in\mathbb{Z}$ with $(h,q_{1})=1$, the number of solutions $L$ of the congruence $$x^{l}\equiv h\mmod{q_{1}}$$ satisfies $L\ll q_{1}^{\varepsilon}.$ Therefore, we can partition the primes into $L$ classes $\mathcal{P}_{j}$ such that for any pair of distinct primes $p_{1},p_{2}\in\mathcal{P}_{j}$ with $p_{1}^{l}\equiv p_{2}^{l}\mmod{q_{1}}$ then $p_{1}\equiv p_{2}\mmod{q_{1}}.$

Next observe that by the choice of $\gamma(p^{l})$ made in (\ref{ec1.33}) we find that 
$$\|k\alpha(p_{1}^{l}-p_{2}^{l})\|=\|\gamma_{k-1}(p_{1}^{l})-\gamma_{k-1}(p_{2}^{l})\|.$$ Then using the hypothesis on $\lvert\beta\rvert$ described above we obtain $$\|k\alpha(p_{1}^{l}-p_{2}^{l})\|\geq \| ka(p_{1}^{l}-p_{2}^{l})/q\|-\frac{1}{2}q^{-1}\geq \frac{1}{2}q^{-1}$$ provided that $p_{1}\not\equiv p_{2}\mmod{q_{1}}.$

When $q_{1}>P$ one cannot have pairs of distinct primes $p_{1},p_{2}\in\mathcal{P}_{j}$ with $p_{1}\equiv p_{2}\mmod{q_{1}},$ whence we always have $\|\gamma_{k-1}(p_{1}^{l})-\gamma_{k-1}(p_{2}^{l})\|\gg X^{-1}.$ Whenever $M/k< q_{1}\leq P$ then we partition each of $\mathcal{P}_{j}$ into $L_{j}$ classes $\mathcal{P}_{j,i}$ with the property that no pair of distinct primes belonging to $\mathcal{P}_{j,i}$ are congruent modulo $q_{1}$ and with $L_{j}$ satisfying the bound $L_{j}\ll Pq_{1}^{-1}.$ Consequently, the same argument leads to the estimate $\|\gamma_{k-1}(p_{1}^{l})-\gamma_{k-1}(p_{2}^{l})\|\gg X^{-1}$ for distinct $p_{1},p_{2}\in \mathcal{P}_{j,i}.$ Finally, when $q_{1}\leq M/k$ one has $q\leq M$, whence whenever $p_{1}\equiv p_{2}\mmod{q_{1}}$ then the condition on $\beta$ described in the proposition yields
$$\|\gamma_{k-1}(p_{1}^{l})-\gamma_{k-1}(p_{2}^{l})\|=\|k\alpha(p_{1}^{l}-p_{2}^{l})\|=\lvert \beta\rvert \lvert k(p_{1}^{l}-p_{2}^{l})\rvert\gg P^{l-1}Mn^{-1}.$$

We combine Lemma 5.3 of Vaughan \cite{Vau} and the above discussion to obtain the upper bound
\begin{equation}\label{minst}\sum_{\substack{P/2<p\leq P\\ p\in\mathcal{P}_{j}}}\lvert f_{p}(\alpha)\rvert^{2s}\ll X^{k(k-1)/2}P(q+M)^{-1}\sum_{\bfn\in\mathcal{N}}|a(\bfn)|^{2}\end{equation} for $q$ in any of the ranges described above. Bounding the coefficients $a(\bfn)$ trivially one gets
\begin{equation*}\label{mins}\sum_{\substack{P/2<p\leq P\\ p\in\mathcal{P}_{j}}}\lvert f_{p}(\alpha)\rvert^{2s}\ll X^{k(k-1)/2}P(q+M)^{-1}J_{s,\xi_{1}}^{(k-1)}(P_{3}),\end{equation*}
where $J_{s,\xi_{1}}^{(k-1)}(P_{3})$ was defined in (\ref{ec2.9}). Then combining the above equation with an application of Cauchy's inequality we get
\begin{equation*}\label{Falpha}\lvert\mathcal{G}(\alpha)\rvert^{2s}\ll P^{2s-1+\varepsilon}\sum_{j}\sum_{\substack{P/2<p\leq P\\ p\in\mathcal{P}_{j}}}\lvert f_{p}(\alpha)\rvert^{2s} \ll P^{2s+\varepsilon}X^{k(k-1)/2}M^{-1}J_{s,\xi_{1}}^{(k-1)}(P_{3}),\end{equation*} and hence for the choice $s=k(k-1)/2$ then Proposition \ref{prop1} delivers
$$\mathcal{G}(\alpha)\ll |\mathcal{S}|P^{1+\varepsilon}M^{-1/k(k-1)}X^{\delta_{\xi_{1}}/2}.$$ 

For the second claim of the proposition we combine (\ref{minst}) and Cauchy's inequality in the same way as above and we integrate over $\grm_{M}$ to obtain
$$\int_{\grm_{M}}\lvert\mathcal{G}(\alpha)\rvert^{2s}d\alpha\ll P^{2s+\varepsilon}M^{-1}X^{k(k-1)/2}J_{s,\xi_{1}}^{(k)}(P_{3}).$$An application of Proposition \ref{prop1} to the above line then yields 
 $$\int_{\grm_{M}}\lvert\mathcal{G}(\alpha)\rvert^{2s}d\alpha\ll |\mathcal{S}|^{2s}P^{2s+\varepsilon}M^{-1}X^{-k+\Delta_{\xi_{1}}},$$from where the second statement follows.
\end{proof}
In the rest of the section we deliver a lower bound for the major arc contribution. We will work with the auxiliary functions $f(\alpha)$, $S_{n}'(q)$, $u(\beta)$ and $f^{*}(\alpha)$, defined in (\ref{fte}), (\ref{Sn'}), (\ref{Ualp}) and (\ref{f*}) respectively but replacing $\xi$ by $t$ whenever such parameters appear in any of the definitions. We have avoided making such distinction in the notation explicit for the sake of simplicity. For future purposes we define the singular integral as $$J_{s}'(n)=\int_{-1/2}^{1/2}u(\beta)^{s}e(-\beta n)d\beta.$$ 
\begin{lem}\label{lem7.1}
Suppose that $s\geq 2$. Then,
$$J_{s}'(n)=n^{s\xi/kl-1}\Big(k^{-s}\Gamma(\xi/kl)^{s}\Gamma(s\xi/kl)^{-1}+O\big(B(n)\big)\Big),$$
where $B(n)=n^{-1}+n^{-\xi/kl}.$
\end{lem}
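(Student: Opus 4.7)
The plan is to evaluate $J_s'(n)$ by unfolding via orthogonality to a weighted lattice-point sum on the hyperplane $m_1 + \ldots + m_s = n$, approximating this discrete sum by the corresponding continuous integral over the simplex, and then computing that integral via the Dirichlet--Beta formula.

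First I would expand $u(\beta)^s$ and integrate term-by-term against $e(-\beta n)$; only the tuples with $m_1 + \ldots + m_s = n$ survive, yielding
\begin{equation*}
J_s'(n) = k^{-s} \sum_{\substack{m_1 + \ldots + m_s = n \\ 1 \leq m_i}} (m_1 m_2 \cdots m_s)^{\xi/(kl) - 1}.
\end{equation*}
Writing the right-hand side as an $(s-1)$-fold iterated sum (with $m_s = n - m_1 - \ldots - m_{s-1}$) and applying Euler--Maclaurin in each summation variable in turn, I would approximate the sum by the integral
\begin{equation*}
k^{-s} \int_{\substack{x_1 + \ldots + x_s = n \\ 1 \leq x_i}} (x_1 \cdots x_s)^{\xi/(kl) - 1} \, d\sigma,
\end{equation*}
where $d\sigma$ denotes the flat measure on the simplex. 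At each step the discretisation error is controlled by the boundary values of the integrand, and the accumulated error after iterating through the $s-1$ free variables is of order $n^{s\xi/(kl) - 2}$, which produces the $n^{-1}$ term in $B(n)$.

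Next I would extend the integration domain from $\{x_i \geq 1\}$ to the full simplex $\{x_i \geq 0\}$. By symmetry the correction (where at least one coordinate lies in $[0,1)$) can be bounded explicitly using $x_j^{\xi/(kl)-1} \leq 1$ on $[1, n]$ together with elementary estimates for the remaining factors, contributing $O(n^{(s-1)\xi/(kl) - 1})$ and thereby producing the $n^{-\xi/(kl)}$ term in $B(n)$. On the full simplex the change of variables $x_i = n z_i$ together with the classical Dirichlet--Beta identity
\begin{equation*}
\int_{\substack{z_1 + \ldots + z_s = 1 \\ z_i \geq 0}} (z_1 \cdots z_s)^{\xi/(kl) - 1} \, d\sigma = \frac{\Gamma(\xi/(kl))^s}{\Gamma(s\xi/(kl))}
\end{equation*}
yields the main term $k^{-s} n^{s\xi/(kl) - 1} \Gamma(\xi/(kl))^s / \Gamma(s\xi/(kl))$, and combining with the two error contributions above gives exactly the claimed asymptotic.

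The main obstacle I expect is keeping the Euler--Maclaurin discretisation error uniform through the $(s-1)$-fold iteration, since $\xi/(kl)$ is not guaranteed to exceed $1$ and thus the weight $x^{\xi/(kl) - 1}$ has an integrable singularity at the origin. To handle this one must isolate the small-coordinate regions at each step of the iteration and bound them separately from the bulk; the hypothesis $s \geq 2$ ensures sufficient convergence on the simplex, so the resulting bounds are uniform in $l$.
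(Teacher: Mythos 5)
Your proposal is correct and takes essentially the same route as the paper: unfold $J_{s}'(n)$ by orthogonality into the weighted count over $m_{1}+\cdots+m_{s}=n$, compare this sum with the corresponding Euler/Dirichlet integral, and extract the main term from the Beta identity, with the discretisation and boundary (small-coordinate) contributions supplying the two terms of $B(n)$. The paper merely organises the iterated one-variable comparisons as an induction on $s$ via the convolution $J_{s+1}'(n)=k^{-1}\sum_{m}m^{\xi/kl-1}J_{s}'(n-m)$, which amounts to the same computation as your $(s-1)$-fold iterated Euler--Maclaurin step followed by the Dirichlet formula on the simplex.
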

\begin{proof}
We will proceed by induction. We consider for convenience the function $\phi(\gamma)=\gamma^{\xi/kl-1}(n-\gamma)^{\xi/kl-1}$. When $s=2$ then orthogonality yields
\begin{align*}J_{2}'(n)&
=k^{-2}\sum_{m=1}^{n-1}\phi(m)=k^{-2}\int_{0}^{n}\phi(\gamma)d\gamma+O\big(n^{2\xi/kl-1}B(n)\big)
\\
&=k^{-2}\Gamma(\xi/kl)^{2}\Gamma(2\xi/kl)^{-1}n^{2\xi/kl-1}+O\big(n^{2\xi/kl-1}B(n)\big),
\end{align*}
where we used the fact that $\phi(\gamma)$ has at most one stationary point on the interval $(0,n).$ By using the inductive hypothesis we obtain
\begin{align*}J_{s+1}'(n)&
=k^{-1}\sum_{m=1}^{n-1}m^{\xi/kl-1}J_{s}'(n-m)
\\
&=k^{-s-1}\Gamma(\xi/lk)^{s}\Gamma(s\xi/kl)^{-1}\sum_{m=1}^{n-1}m^{\xi/kl-1}(n-m)^{s\xi/kl-1}+O\big(n^{(s+1)\xi/kl-1}B(n)\big).
\end{align*}
Applying the same argument we used for the case $s=2$ we find that
$$\sum_{m=1}^{n-1}m^{\xi/kl-1}(n-m)^{s\xi/kl-1}=n^{(s+1)\xi/kl-1}\big(\lambda_{s}+O\big(B(n)\big)\big),$$ where $\lambda_{s}=\Gamma(s\xi/kl)\Gamma(\xi/kl)\Gamma\big((s+1)\xi/kl\big)^{-1},$ whence combining the above equations we obtain the desired result.
\end{proof}
In order to make further progress we consider the set of major arcs $\grN_{\iota}=\grM(P^{1/2+\iota})$, where $\grM$ was defined in (\ref{grM}) and where we take $\iota=1/1000.$ Likewise, we define the minor arcs $\grn_{\iota}=[0,1)\setminus \grN_{\iota}.$ Note that using equation (\ref{piu}) and Proposition \ref{prop200} we obtain for $\alpha\in\grN_{\iota}$ the bound $$f(\alpha)^{s}-f^{*}(\alpha)^{s}\ll P^{s\xi-s/2+s\iota}+P^{\xi-1/2+\iota}w_{k}(q)^{s-1}u(\beta)^{s-1}.$$  Consequently, whenever $s\geq k+2$ then Lemma \ref{lem4.1} gives
\begin{align*}\int_{\grN_{\iota}}\lvert f(\alpha)^{s}-f^{*}(\alpha)^{s}\rvert d\alpha&
\ll P^{s\xi-s/2+(s+2)\iota+1}n^{-1}+P^{s\xi-1/2+\iota}n^{-1}\sum_{q\leq P^{1/2+\iota}}qw_{k}(q)^{s-1}
\\
&\ll P^{s\xi-\delta}n^{-1},
\end{align*}
where in the last step we used (\ref{pri}). Observe as well that (\ref{qkesqt}) and Lemma \ref{lem4.1} deliver the bounds
$$\sum_{q>Q}\lvert S'_{n}(q)\rvert\ll Q^{\varepsilon-1/k},\ \ \ \ \ \ \ \ \ \int_{\lvert\beta\rvert>P^{1/2+\iota}q^{-1}n^{-1}}\lvert u(\beta)\rvert^{s}d\beta\ll P^{s\xi}n^{-1}q^{\delta}P^{-\delta(1/2+\iota)}$$ whenever $s\geq \max(5,k+2)$ for any $Q>0$ and $q\leq P^{1/2+\iota}$ respectively. Therefore, Lemma \ref{lem7.1}, the above estimates and a change of variables give
\begin{equation}\label{JNu}\int_{\grN_{\iota}}f(\alpha)^{s}e(-\alpha n)d\alpha=C_{k,l,\xi}n^{s\xi/kl-1}\frak{S}'(n)+O\big(n^{s\xi/kl-1-\delta}\big),\end{equation} where $C_{k,l,\xi}=k^{-s}c_{\xi,l}^{s}\Gamma(\xi/lk)^{s}\Gamma(s\xi/kl)^{-1}$ and $c_{\xi,l}$ was defined in (\ref{Ualp}). Observe that when $s\geq 4k$ then Proposition \ref{Prop4} yields the lower bound $\frak{S}'(n)\gg 1.$ Likewise, Proposition \ref{prop223} delivers
\begin{equation*}\int_{\grn_{\iota}}\lvert\mathcal{G}(\alpha)\rvert^{k(k+1)}d\alpha\ll |\mathcal{S}|^{k(k+1)}P^{k(k+1)-1/2-\iota}X^{-k+\Delta_{\xi_{1}}+\varepsilon}\ll |\mathcal{S}|^{k(k+1)}P^{k(k+1)}X^{-k-\delta},\end{equation*}whence using (\ref{JNu}) and the ideas of the proof of Proposition \ref{propthm} to derive a lower bound for the major arc contribution and combining such bound with the above minor arc estimate we obtain
$$\int_{0}^{1}\mathcal{G}(\alpha)^{k(k+1)}f(\alpha)^{s}e(-\alpha n)d\alpha\gg |\mathcal{S}|^{k(k+1)}P^{k(k+1)+s\xi}(\log P)^{-k(k+1)}n^{-1},$$which concludes the proof of Theorem \ref{thm1.3}.

\end{document}